\newcommand{\bbE}{\mathbb{E}}
\newcommand{\bbN}{\mathbb{N}}
\newcommand{\bbP}{\mathbb{P}}
\newcommand{\bbR}{\mathbb{R}}
\newcommand{\one}{\boldsymbol{1}}
\newcommand{\nint}[1]{\left\lfloor #1 \right\rceil}
\newcommand{\dd}{\mathrm d}
\newtheorem{theorem}{Theorem}[section]
\theoremstyle{definition}
\newtheorem{remark}[theorem]{Remark}
\begin{document}

\begin{frontmatter}



\title{Input estimation from discrete workload observations\\ in a Lévy-driven storage system\footnote{To appear in Statistics and Probability Letters.}}

\author[1]{Dennis Nieman}
\author[2]{Michel Mandjes}
\author[3]{Liron Ravner}
\address[1]{\small{Department of Mathematics, Vrije Universiteit Amsterdam}}
\address[2]{\small{Korteweg-de Vries Institute, University of Amsterdam}}
\address[3]{\small{Department of Statistics, University of Haifa}}

\begin{abstract}
Our goal is to estimate the characteristic exponent of the input to a Lévy-driven storage system  from a sample of equispaced workload observations. The estimator relies on an approximate moment equation associated with the Laplace-Stieltjes transform of the workload at exponentially distributed sampling times. The estimator is pointwise consistent for any observation grid.  Moreover, a high frequency sampling scheme yields asymptotically normal estimation errors for a class of input processes.  A resampling scheme that uses the available information in a more efficient manner is suggested and assessed via simulation experiments.

\end{abstract}

\begin{keyword}
 L\'evy-driven storage system  \sep discrete workload observations \sep high-frequency sampling



\end{keyword}

\end{frontmatter}

\noindent
{\it Acknowledgments.} The research of Michel Mandjes and Liron Ravner is partly funded by NWO Gravitation project N{\sc etworks}, grant number 024.002.003. 

\section{Introduction}
To optimally design and control queueing systems, it is of crucial importance to have reliable estimates of model primitives. One important class of such systems is formed by so called {\it L\'evy-driven queues}, see e.g.\ \cite{dm2015}, which can be regarded as L\'evy processes on which the Skorokhod map is imposed (or, equivalently, L\'evy processes reflected at $0$). A L\'evy process is uniquely characterized by its Lévy exponent, a function that captures a full probabilistic description of the process dynamics. This paper deals with estimation of the characteristic L\'evy exponent from equidistant observations of the reflected process. We focus on storage systems whose input is a non-decreasing Lévy process. A special case of the last type of systems is the classical M/G/1 queue, where the driving L\'evy process is a compound Poisson process. 

When it comes to estimation of model primitives in a queueing system, a main complication is that in many situations one cannot observe the queue’s input process (in the concrete case of the M/G/1 queue: the individual interarrival times and service requirements); instead, one often only has discrete equidistant observations of the associated  workload process. The challenge lies in developing sound techniques to statistically estimate, based on these workload observations, the L\'evy exponent corresponding the system's input process. This type of inverse problem is challenging as densities are unavailable, impeding the use of conventional maximum likelihood procedures. In \cite{rbm2019} a method of moments estimator was constructed for the L\'evy exponent of the input process by sampling the workload according to an independent Poisson process. The advantage of so-called Poisson sampling is that the distribution of the workload after an exponentially distributed time conditional on an initial workload is known, in terms of the Laplace-Stieltjes transform (LST). This paper shows that the Poisson sampling framework can be leveraged to construct estimators for the L\'evy exponent even when the sampling times are deterministic (rather than Poisson).

\subsection{Approach and main contributions}
The main contribution lies in the construction of an estimator of the L\'evy exponent of the queue's input process, based on discrete equidistant workload observations. To this end we use an approximate estimation equation that relies on the Poisson sampling scheme of \cite{rbm2019}. 

A second contribution concerns an intermediate step in the approach of \cite{rbm2019}: there the approach required the estimation of the inverse of the  L\'evy exponent at the point of the Poisson sampling rate, whereas we succeed in providing a modified estimator that no longer requires this. On top of the practical and computational advantages of this direct method, this enables a simpler derivation of the asymptotic variance of the estimation error.

Third, we provide performance guarantees of the resulting estimator.  We start by establishing consistency, in which the mesh size $\Delta$ (the time between two subsequent observations) does not play any role. We then prove asymptotic normality, which requires us to pick $\Delta$ in a specific way, depending on the number of observations $n$; the underlying argumentation distinguishes between the case that the driving L\'evy process is of finite and infinite intensity.  For the case of infinite intensity the CLT requires an additional condition that the Blumenthal-Getoor index $\beta$ (see \cite{bg1961}) satisfies $\beta<1/4$. 

Finally, we propose a resampling scheme that uses the available information in a more efficient manner. Its performance is studied in a simulation experiment.

\subsection{Related literature}
An exhaustive recent overview of the existing statistical queueing literature is given in \cite{ant2021}. Here we restrict ourselves to discussing a few results that directly to our work.  
 The work of \cite{bnl1996} discusses a maximum likelihood estimator based on the waiting times of the customers in a GI/G/1 queue.   For the M/G/1 queue,  \cite{hp2006} construct a non-parametric approach for the job-size distribution from discretely observed workload observations.  Another stream of the literature focuses on nonparametric estimation of the input distribution to infinite server queueing systems; these systems have the intrinsic advantage that clients do no wait, which is, for estimation purposes, a convenient feature. For example, \cite{bp1999} and \cite{g2016} suggest estimators based on observations of the number of clients present.

There is also substantial literature on estimation of L\'evy processes (i.e., without reflection). Here, having discrete observations (of the process' increments, that is) is also a common assumption.  We refer to for example  \cite{k2010}, \cite{br2015} and references therein. \cite{gs2010} propose an estimator for the L\'evy triplet based on a high-frequency sampling scheme and show that the asymptotic performance for the finite intensity case (i.e., compound Poisson) differs from that of the infinite intensity case. Namely, consistency holds for both but asymptotic normality of the estimation errors only for the finite intensity case. This distinction is crucial in our work as well, but in our setup asymptotic normality is established for infinite intensity input processes that satisfy an additional condition on the intensity index defined by \cite{bg1961}. 

No statistical methods had been proposed in the setting of storage systems with Lévy input, until the appearance of \cite{rbm2019}. This work suggests an estimator of the L\'evy exponent based on workload observations at Poisson epochs --- a method known as `Poisson probing'; see e.g.\ \cite{bkv2009}.   In \cite{mr2021} the setup of \cite{rbm2019} is further used for constructing hypothesis tests for L\'evy-driven systems.  The estimator of the L\'evy exponent of the input process is leveraged  in \cite{r2023} to yield a non-parametric estimator of the job-size distribution in an M/G/1 queue.   

%

\section{Model}\label{s:model}
To introduce our model, we briefly recall the 
construction of the storage model under consideration, also known as a reflected L\'evy process. Many facts will be stated without proof or reference; see for background material e.g. the books \cite{dm2015} and  \cite{p1998}.

Let $J(\cdot) = \{J(t)\}_{t \geq 0}$ be an almost surely increasing Lévy process (a \textit{subordinator}). The Lévy process assumption entails that $J(\cdot)$ has stationary, independent increments and càdlàg sample paths, and starts at zero. The process $J(\cdot)$ is considered the input to a storage system with unit rate linear output. We consider the net input process $X(\cdot)$ whose value at time $t$ is $X(t) = J(t) - t$. The workload process associated with the model is defined through 
$V(t) = X(t) + \max\{V(0), -\inf_{0 \leq s \leq t} X(s)\}$.
We say that $V(\cdot)$ is the reflection of $X(\cdot)$ at $0$.
We impose the stability condition $\bbE X(1) < 0$, so that that the Lévy exponent $\varphi : [0,\infty) \to \bbR$ given by $\varphi(\alpha) = \log \bbE e^{-\alpha X(1)}$, is finite and strictly increasing on $[0,\infty)$. As such, $\varphi$ has an inverse, denoted by $\psi$. 
These assumptions imply that the L\'evy exponent is necessarily of the form
\begin{equation}\label{e:levyk}
\varphi(\alpha) = \alpha - \int_{(0,\infty)}\big( 1-e^{-\alpha x} \big)\,\nu(\dd x),
\end{equation}
for some unique measure $\nu$ on $(0,\infty)$ satisfying $\int_{(0,\infty)} \min\{1,x\} \,\nu(\dd x) < \infty$. 

If $T$ is exponentially distributed with rate  $\xi>0$, independent of $X(\cdot)$, and if $V(0)=x$, then
\begin{equation} \label{e:lapVT}
\bbE_x e^{-\alpha V(T)} = \int_0^\infty \xi e^{-\xi t} \bbE_x e^{-\alpha V(t)} \,\dd t = \frac{\xi}{\xi-\varphi(\alpha)} \Big( e^{-\alpha x} - \frac{\alpha}{\psi(\xi)} e^{-\psi(\xi)x}\Big);
\end{equation}
see for instance \cite{kbm2006}; the dependence on the initial workload $V(0)=x$ is indicated by the subscript $x$. In the sequel, we also use the identity, following  upon combining \eqref{e:levyk} and \eqref{e:lapVT},
\begin{equation} \label{e:zeroVT}
\bbP_x(V(T) = 0) = \lim_{\alpha\to\infty} \bbE_x e^{-\alpha V(T)} = \frac{\xi}{\psi(\xi)} e^{-\psi(\xi)x}.
\end{equation}

The stability condition also implies existence of a stationary distribution $\pi$ for $V(\cdot)$. It is the unique distribution satisfying, with $A\subseteq [0,\infty)$,
$\pi(A) = \int_{[0,\infty)} \bbP_v(V(t) \in A) \,\pi(\dd v),\quad t\geq0$.
The stationary distribution $\pi$ is also the weak limit of $V(t)$ as $t\to\infty$. We will use the so-called generalized Pollaczek-Khintchine formula (following from \eqref{e:lapVT} by letting $\xi\downarrow 0$)
\begin{equation}\label{e:piexp}{\mathbb E}_x\,e^{-\alpha V(\infty)}=
\int_{[0,\infty)} e^{-\alpha v} \,\pi(\dd v) = \frac{\alpha \varphi'(0)}{\varphi(\alpha)},
\end{equation}
with the left-most expression obviously not depending on the initial storage level $x$. 
From this it also follows, with help of \eqref{e:levyk}, that the stationary probability of zero storage equals
\begin{equation}\label{e:pizero}
\pi(\{0\})=\lim_{\alpha\to\infty} \int_{[0,\infty)} e^{-\alpha v} \,\pi(\dd v) = \varphi'(0). \qedhere
\end{equation}


\section{Estimator of the Lévy exponent}\label{s:estimator}
We start by observing the storage level at Poisson instants, which will then be rounded off to the nearest multiple of the grid width $\Delta>0$. Let $\xi > 0$ and consider the sequence $T_1,T_2,\ldots$ which are i.i.d.\ exponentially distributed random variables with rate $\xi$. Denote its partial sums by $S_i = \sum_{j=1}^i T_j$. We thus obtain a sequence of observations $V_i := V(S_i)$, with each $V_i$ occurring an exponentially distributed time after its precursor $V_{i-1}$. By \eqref{e:lapVT} and \eqref{e:zeroVT},
\begin{align}\nonumber
\bbE[e^{-\alpha V_i} | V_{i-1}] &= \frac{\xi}{\xi-\varphi(\alpha)} \Big( e^{-\alpha V_{i-1}} - \frac{\alpha}{\psi(\xi)} e^{-\psi(\xi)V_{i-1}}\Big) \\
&= (\xi-\varphi(\alpha))^{-1}\Big( \xi e^{-\alpha V_{i-1}} - \alpha\, \bbP(V_i=0\,|\,V_{i-1})\Big).\label{e:lapVi}
\end{align}
Rearranging and taking expectations in \eqref{e:lapVi} yields
\begin{equation*}
\varphi(\alpha) \,\bbE e^{-\alpha V_i} = \xi \,\bbE\big(e^{-\alpha V_i} - e^{-\alpha V_{i-1}}\big) + \alpha \,\bbP(V_i = 0).
\end{equation*}
The corresponding empirical moment equation is
\begin{equation*}
\varphi(\alpha) \frac{1}{n} \sum_{i=1}^n e^{-\alpha V_i} = \xi \frac{1}{n} \sum_{i=1}^n (e^{-\alpha V_i} - e^{-\alpha V_{i-1}}) + \alpha \frac{1}{n} \sum_{i=1}^n \one\{V_i = 0\}.
\end{equation*}
Solving for $\varphi(\alpha)$, and recognizing a telescoping sum, we thus obtain an estimator for $\varphi(\alpha)$:
\begin{equation}\label{e:PPest}
\hat{\varphi}_n(\alpha) = \frac{n^{-1}\xi(e^{-\alpha V_n} - e^{-\alpha V_0}) + \alpha n^{-1} \sum_{i=1}^n \one\{V_i=0\}}{n^{-1} \sum_{i=1}^n e^{-\alpha V_i}}.
\end{equation}
The factors $n^{-1}$ in \eqref{e:PPest} are kept since they will be used in the asymptotic analysis later. Note that this estimator, which is (to the best of our knowledge) novel, sidesteps the intermediate step of estimating $\psi(\xi)$, which was necessary in the method of \cite{rbm2019}.

We then convert the estimator \eqref{e:PPest} into an estimator that only uses observations on the grid $0, \Delta, 2\Delta,\ldots,$ applying a specific discretization. To this end, let $S_i^\Delta = \Delta \nint{S_i/\Delta}$, $V_i^\Delta = V(S_i^\Delta)$, where $\nint{\,\cdot\,}$ is the nearest integer function, so that $\Delta\nint{t/\Delta}$ is the integer multiple of $\Delta$ closest to any fixed $t \geq 0$. Correspondingly, we define the `$\Delta$-counterpart' of \eqref{e:PPest}:
\begin{equation}\label{e:est}
\hat{\varphi}_n^\Delta(\alpha) = \frac{n^{-1}\xi(e^{-\alpha V_n^\Delta} - e^{-\alpha V_0^\Delta}) +\alpha n^{-1} \sum_{i=1}^n \one\{V_i^\Delta=0\}}{n^{-1} \sum_{i=1}^n e^{-\alpha V_i^\Delta}}.
\end{equation}
The observations $V_i^\Delta$ form a random subset of the equidistant observations $\{V(i\Delta) : i=0,1,\ldots\}$. An illustration of this sampling scheme is given in Figure~\ref{f:sampling}. Observe that it is possible, for small $T_i$, that $S_i^\Delta=S_{i-1}^\Delta$. In this case there is no additional information from sample $i$, however the estimator \eqref{e:est} is still well defined and this will not impact its asymptotic performance.

\begin{figure*}[h]
	\makebox[\textwidth][c]{\includegraphics{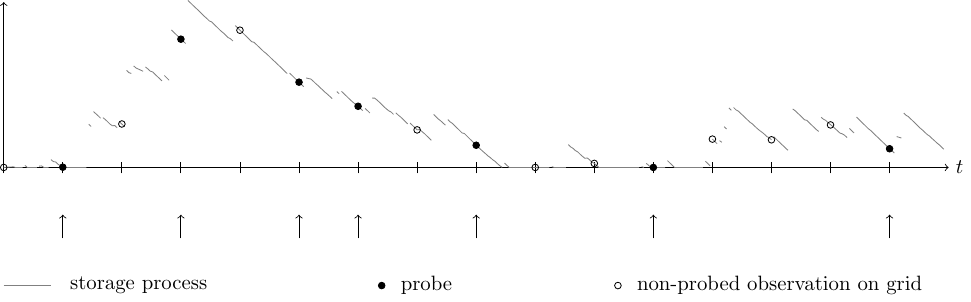}}
	\caption{Discrete probing of a storage process. }\label{f:sampling}
\end{figure*}

\section{Consistency}\label{sec:cons}
In this section, the `almost surely' statements are with respect to any arbitrary distribution for $V(0)$. Importantly, the consistency applies regardless of the value of the grid width $\Delta.$

\begin{theorem}\label{t:cons}
Let $\xi,\Delta > 0$ and $\alpha \geq 0$. Then \begin{equation*}
\lim_{n\to\infty} \hat{\varphi}_n^\Delta(\alpha) = \varphi(\alpha) \text{ a.s.}
\end{equation*}
\end{theorem}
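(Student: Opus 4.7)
The plan is to analyse the three empirical pieces that make up $\hat\varphi_n^\Delta(\alpha)$, namely
\[
A_n = \frac{1}{n}\sum_{i=1}^n e^{-\alpha V_i^\Delta},\qquad B_n = \frac{1}{n}\sum_{i=1}^n \one\{V_i^\Delta=0\},\qquad C_n = \frac{1}{n}\big(e^{-\alpha V_n^\Delta}-e^{-\alpha V_0^\Delta}\big),
\]
and then combine them. Since $|e^{-\alpha V^\Delta_n}-e^{-\alpha V^\Delta_0}|\leq 2$ deterministically, $C_n\to 0$ almost surely, so the heart of the argument is to identify the almost-sure limits of $A_n$ and $B_n$. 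I expect to show that both converge to the corresponding $\pi$-integrals, that is, $A_n\to \int e^{-\alpha v}\pi(\dd v) = \alpha\varphi'(0)/\varphi(\alpha)$ by \eqref{e:piexp}, and $B_n\to \pi(\{0\}) = \varphi'(0)$ by \eqref{e:pizero}. Plugging these into \eqref{e:est} then gives
\[
\hat\varphi_n^\Delta(\alpha)\longrightarrow \frac{0+\alpha\varphi'(0)}{\alpha\varphi'(0)/\varphi(\alpha)} = \varphi(\alpha).
\]
Notice that neither $\xi$ nor $\Delta$ survives into this limit, which reflects the conceptual point that, under stationarity, the moment equation motivating the estimator collapses onto the Pollaczek--Khintchine identity \eqref{e:piexp}: the bias introduced by replacing $V_i$ by its discretised version $V_i^\Delta$ is washed out by ergodic averaging.

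To produce these ergodic averages I would exploit the Poisson structure of the sampling, conditionally on the workload path. Let $N_k = \#\{i\geq 1 : \Delta\nint{S_i/\Delta}=k\Delta\}$; since $\{S_i\}$ is a rate-$\xi$ Poisson process, the $\{N_k\}_{k\geq 0}$ are independent counts of the disjoint bins $[(k-\tfrac12)\Delta,(k+\tfrac12)\Delta)$ (with the $k=0$ bin truncated to $[0,\Delta/2]$), independent of the path of $V(\cdot)$, each Poisson with mean $\xi\Delta$ apart from the boundary at $k=0$. Consequently, for any bounded measurable $f$,
\[
\sum_{i=1}^n f(V_i^\Delta) \;=\; \sum_{k=0}^{M_n} N_k^{(n)}\, f(V(k\Delta)),
\]
where $M_n$ is the largest grid index probed in the first $n$ observations and $N_k^{(n)}=N_k$ for $k<M_n$. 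The strong law applied to the independent sequence $\{N_k\}$ yields $M^{-1}\sum_{k=0}^{M-1} N_k\to \xi\Delta$ almost surely, which also pins down $M_n/n\to 1/(\xi\Delta)$.

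To close the argument I need an ergodic theorem for the discrete-time skeleton $\{V(k\Delta)\}_{k\geq 0}$. Under the stability condition $\bbE X(1)<0$, $V(\cdot)$ is positive Harris recurrent with stationary law $\pi$ (a standard result for subordinator-driven storage systems, see e.g.\ \cite{dm2015}), and the $\Delta$-skeleton inherits this property, so $M^{-1}\sum_{k=0}^{M-1} f(V(k\Delta))\to \int f\dd\pi$ almost surely for every bounded measurable $f$ and every initial distribution. Combining this with the independence of $\{N_k\}$ from $V$ and the SLLN for $\{N_k\}$ gives
\[
\frac{1}{M}\sum_{k=0}^{M-1} N_k f(V(k\Delta)) \longrightarrow \xi\Delta \int f \dd\pi \text{\ \ a.s.,}
\]
and dividing by $M^{-1}\sum_{k=0}^{M-1} N_k\to \xi\Delta$ converts this into $n^{-1}\sum_{i=1}^n f(V_i^\Delta)\to\int f\dd\pi$, modulo the routine verification that the boundary term at $k=M_n$ is negligible. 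Specialising to $f(v)=e^{-\alpha v}$ and $f(v)=\one\{v=0\}$ yields the desired limits of $A_n$ and $B_n$. The main obstacle I anticipate is the discontinuous indicator $f(v)=\one\{v=0\}$: mere weak convergence of $V(k\Delta)$ to $\pi$ does not suffice to pass to $\pi(\{0\})$, so I would lean on positive Harris recurrence of the skeleton chain (rather than on a weaker weak-convergence argument) to justify the a.s.\ convergence of $B_n$ to the atom $\pi(\{0\})=\varphi'(0)$.
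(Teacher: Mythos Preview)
Your proposal is correct and follows the same overall strategy as the paper: establish an ergodic theorem for the $\Delta$-skeleton $\{V(k\Delta)\}_{k\geq 0}$ via (positive) Harris recurrence, transfer this to the Poisson-subsampled sequence $\{V_i^\Delta\}$, specialise to $f(v)=e^{-\alpha v}$ and $f(v)=\one\{v=0\}$, and then invoke \eqref{e:piexp}--\eqref{e:pizero}. The difference lies in how the transfer step is carried out. The paper dispatches it in one line by citing a {\sc pasta}-type result (Theorem~2 of \cite{mmw1989}), whereas you build the argument from scratch using the bin-count representation $\sum_{i=1}^n f(V_i^\Delta)=\sum_k N_k^{(n)} f(V(k\Delta))$, the independence of the Poisson counts $\{N_k\}$ from the path of $V(\cdot)$, and the SLLN for $\{N_k\}$. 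Your route is more elementary and self-contained, at the cost of having to handle the product-ergodic step $\tfrac{1}{M}\sum_k N_k f(V(k\Delta))\to \xi\Delta\int f\,\dd\pi$ and the boundary term at $k=M_n$ explicitly; the paper's route is shorter but leans on an external reference. Both are sound, and your remark that Harris recurrence (rather than mere weak convergence) is what justifies the limit for the discontinuous indicator $\one\{v=0\}$ is exactly the right point.
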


\begin{proof}
The workload process $\{V(t)\}_{t \geq 0}$ is a Harris recurrent Markov process. By Proposition VII.3.8 in \cite{a2003}, the `skeleton process' $\{V(i\Delta)\}_{i \in \bbN}$ is a Harris recurrent Markov chain. By a Markov chain ergodic theorem (see, e.g., Theorem 14.2.11 in \cite{athreya2006}), 
\begin{equation}\label{e:ergodic}
n^{-1} \sum_{i=1}^n f(V(i\Delta)) \to \int_{[0,\infty)} f\,\dd \pi \text{ a.s.}
\end{equation}
for any function $f$ on $[0,\infty)$ integrable with respect to the stationary distribution $\pi$ of $V(\cdot)$. 

The sequence of discrete sampling times is independent of the workload process and therefore satisfies the Weak Lack of Anticipation Assumption required for PASTA (see Theorem 2 of \cite{mmw1989}). 
In particular, the average at sampling instants converges to the time-average limit and \eqref{e:ergodic} also holds with $V_i^\Delta$ replacing $V(i\Delta)$. Taking $f(v) = \one\{v=0\}$ and $f(v)=e^{-\alpha v}$ yields
\begin{equation*}
\hat{\varphi}_n^\Delta(\alpha) \to \frac{\alpha \pi(\{0\})}{\displaystyle \int_{[0,\infty)} e^{-\alpha v}\,\pi(\dd v)} \text{ a.s.} \end{equation*}
Now the identities \eqref{e:piexp} and \eqref{e:pizero} complete the proof.
\end{proof}

\section{High-frequency central limit theorem}\label{sec:clt}

This section provides a pointwise central limit theorem for the estimation error of $\hat\varphi_n^\Delta$. To do so we consider a high-frequency sampling scheme with grid width $\Delta = \Delta_n = n^{-\gamma}$ for some $\gamma>0$. 
We characterize the processes for which the result holds through the Blumenthal-Getoor index (see  \cite{bg1961})
\[ \beta  = \inf \left\lbrace b > 0 : \int_{(0,1)} x^b \,\nu(\dd x) < \infty \right\rbrace \in [0,1].\]

\begin{theorem}\label{t:clt}
Suppose that $J(\cdot)$ has Blumenthal-Getoor index $\beta<1/4$, and $\Delta = \Delta_n = n^{-\gamma}$ for some $\gamma\in\left( 1/(2-2\sqrt{\beta}),1\right)$.
Assume that the workload process $V(\cdot)$ is stationary. Then as $n\to\infty$,
\begin{equation}\label{e:clt}
\sqrt n\, (\hat\varphi_n^\Delta(\alpha) - \varphi(\alpha)) \rightsquigarrow \mathcal N(0,\sigma^2),
\end{equation}
where $\rightsquigarrow$ denotes weak convergence and
\begin{equation}\label{e:avar}
\sigma^2 = \frac{\varphi(\alpha)^2}{\alpha\varphi'(0)}\Big(\alpha + 2\xi\Big(1-\frac{2\varphi(\alpha)}{\varphi(2\alpha)}\Big) + \frac{2\varphi(\alpha)}{\varphi(2\alpha)}(\varphi(\alpha)-\varphi(2\alpha)) \Big).
\end{equation}
\end{theorem}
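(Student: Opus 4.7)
My plan is to first establish a central limit theorem for the Poisson-probed estimator $\hat\varphi_n(\alpha)$ of \eqref{e:PPest}, and then transfer the result to $\hat\varphi_n^\Delta(\alpha)$ by bounding the discretization error.

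For the Poisson-probed CLT, the key observation is that rearranging the conditional identity \eqref{e:lapVi} makes
\[
M_i := (\xi-\varphi(\alpha))e^{-\alpha V_i} + \alpha\,\one\{V_i=0\} - \xi e^{-\alpha V_{i-1}}
\]
a martingale difference sequence for the filtration generated by $(V_0,V_1,\ldots)$. Summing, telescoping, and comparing with \eqref{e:PPest} gives the clean representation
\[
\sqrt n\,(\hat\varphi_n(\alpha) - \varphi(\alpha)) = \frac{n^{-1/2}\sum_{i=1}^n M_i}{n^{-1}\sum_{i=1}^n e^{-\alpha V_i}}.
\]
Under stationarity, the denominator tends almost surely to $\int e^{-\alpha v}\pi(\dd v) = \alpha\varphi'(0)/\varphi(\alpha)$ by the ergodic theorem used in Theorem~\ref{t:cons}, while the numerator obeys the CLT for stationary ergodic martingale differences, converging weakly to $\calN(0,\bbE M_1^2)$. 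Computing $\bbE M_1^2$ is a routine but laborious expansion: evaluate each term via the Pollaczek--Khintchine formula \eqref{e:piexp} at $\alpha$ and $2\alpha$, the zero-mass identity \eqref{e:pizero}, and the conditional LST~\eqref{e:lapVT}. Dividing by the squared denominator limit then yields the explicit variance \eqref{e:avar}.

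To pass to the grid-sampled estimator I would show that $\sqrt n\,(\hat\varphi_n^\Delta(\alpha) - \hat\varphi_n(\alpha)) = o_P(1)$. Since both denominators converge to the same positive constant (by a \textsc{pasta}-type argument as in the proof of Theorem~\ref{t:cons}), this reduces to showing that $\sqrt n \cdot n^{-1}\sum_i|e^{-\alpha V_i^\Delta} - e^{-\alpha V_i}|$ and $\sqrt n \cdot n^{-1}\sum_i|\one\{V_i^\Delta=0\} - \one\{V_i=0\}|$ both vanish in probability; the boundary contribution $n^{-1}\xi(e^{-\alpha V_n^\Delta} - e^{-\alpha V_0^\Delta})$ is trivially $O(n^{-1})$. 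For the exponential sum, the Lipschitz estimate $|e^{-\alpha x}-e^{-\alpha y}|\leq\alpha|x-y|$ combined with the Skorokhod-map bound $|V(s)-V(s')|\leq 2(J(s\vee s')-J(s\wedge s')+|s-s'|)$, the finiteness $\bbE J(1) = 1 - \varphi'(0)<\infty$ from stability, the independence of the sampling times from $J$, and $|S_i-S_i^\Delta|\leq\Delta/2$, jointly yield $\bbE|e^{-\alpha V_i^\Delta}-e^{-\alpha V_i}| = O(\Delta) = O(n^{-\gamma})$. Hence this contribution is $O(n^{1/2-\gamma})$, which vanishes for any $\gamma>1/2$.

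The main obstacle is the indicator sum. By stationarity and the Markov property, $\bbE|\one\{V_i^\Delta=0\}-\one\{V_i=0\}|$ reduces (up to constants) to $\pi(\{0\})\bbE\,\bbP_0(V(U_i)>0)$ with $U_i=|S_i-S_i^\Delta|\leq\Delta/2$. In the finite-intensity (compound Poisson) case this is bounded by $\lambda\bbE U_i$, so the condition $\gamma>1/2$ once again suffices. In the infinite-intensity case the naive bound breaks down because zero is a regular point for $(0,\infty)$: the workload immediately leaves $0$ and one must instead quantify how often it \emph{returns} to zero within an interval of length $\Delta$. This is handled by splitting the subordinator at some jump threshold $\varepsilon$ and balancing the small-jump mass (controlled via the Blumenthal--Getoor index) against drainage at unit rate over an interval of length~$\Delta$. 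Optimizing $\varepsilon$ as a function of $\Delta$ yields a bound of the form $\Delta^{(1-\sqrt\beta)^2/(1-\sqrt\beta)}$ on the per-term probability, and inserting this into the requirement $\sqrt n \cdot n \cdot n^{-1} \cdot \bbE|\one\{V_i^\Delta=0\}-\one\{V_i=0\}| \to 0$ produces exactly the cutoff $\gamma>1/(2-2\sqrt\beta)$. This delicate small-time analysis of the workload near the origin is the key technical step of the proof.
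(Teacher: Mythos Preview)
Your overall strategy coincides with the paper's: represent the scaled error as the ratio of $n^{-1/2}\sum_i M_i$ (the paper writes $Z_i$) over $n^{-1}\sum_i e^{-\alpha V_i}$, recognize $(M_i)$ as a stationary ergodic martingale difference sequence via \eqref{e:lapVi}, apply a martingale CLT to the numerator, handle the denominator by ergodicity, compute $\bbE M_1^2$ from \eqref{e:lapVT}--\eqref{e:pizero}, and then transfer to the grid version by bounding the $L^1$ discretization error. Your Lipschitz/Skorokhod argument giving $\bbE|e^{-\alpha V_i^\Delta}-e^{-\alpha V_i}|=O(\Delta)$ uniformly is in fact tidier than the paper's treatment of this term, which splits into finite and infinite intensity and only reaches $O(\Delta^{1-\beta_0})$ in the latter case.

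The gap is the indicator term in the infinite-intensity case. Your sketch --- split the L\'evy measure at a threshold $\varepsilon$, balance the small-jump mass against unit drainage, and optimize $\varepsilon$ --- is not shown to produce the claimed per-term bound $\Delta^{1-\sqrt\beta}$. After removing jumps exceeding $\varepsilon$ you still face an infinite-activity input, and bounding $\bbP_0\bigl(V^{\le\varepsilon}(s)>0\bigr)$ is essentially the original problem; neither the trivial bound $\bbP(J^{\le\varepsilon}(s)>0)=1$ nor a Markov-type inequality on the workload yields a useful rate. The paper sidesteps this by invoking a Tak\'acs ballot identity (Theorem~2 of \cite{t1966}),
\[
\bbP_0(V(s)=0)=\int_0^s\Bigl(1-\frac{u}{s}\Bigr)\,\bbP\bigl(J(s)\in\dd u\bigr),
\]
which reduces the question about the \emph{reflected} process to the marginal law of $J(s)$ alone. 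The right-hand side is then controlled via the Blumenthal--Getoor small-time tail estimate $\bbP(J(s)>s^{1/\delta})\le Cs^{1-\epsilon/\delta}$ for $\delta>\epsilon>\beta$, applied at two scales ($\delta=1$ and $\delta=\beta_0$); it is this combination that delivers the threshold $\gamma>1/(2-2\sqrt\beta)$. Your proposal lacks this identity or an equivalent device, and without it the infinite-intensity indicator bound does not close.
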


\begin{remark}\label{rem:BG}
As $\beta=0$ for compound Poisson input, the result covers the M/G/1 case. The Gamma process is an additional example with $\beta=0$. The Inverse Gaussian process has $\beta=1/2$, hence it does not satisfy the condition of Theorem~\ref{t:clt}.
We conjecture that a CLT can also be constructed if the BG index is higher than $1/4$, but this likely entails a more elaborate asymptotic variance term that depends on the specific sampling scheme. In the sequel we present simulation experiments that display the expected asymptotic performance for the Inverse Gaussian process.
\end{remark}

\begin{proof}
Note that by \eqref{e:PPest}, 
\begin{equation}\label{eq:fr}\sqrt n (\hat\varphi_n^\Delta(\alpha) - \varphi(\alpha)) = \frac{n^{-1/2}\sum_{i=1}^n Z_i^\Delta}{n^{-1}\sum_{i=1}^n e^{-\alpha V_i^\Delta}} \end{equation}
where $ Z_i^\Delta = (\xi - \varphi(\alpha)) e^{-\alpha V_i^\Delta} - \xi e^{-\alpha V_{i-1}^\Delta} + \alpha \one\{V_i^\Delta = 0\}$. 
The proof strategy is to first establish a CLT for the numerator of \eqref{eq:fr}, which is then later used to prove \eqref{e:clt}.
To this end, we approximate $Z_i^\Delta$ by $ Z_i = (\xi - \varphi(\alpha))e^{-\alpha V_i} - \xi e^{-\alpha V_{i-1}} + \alpha \one\{V_i = 0\}$, i.e., the counterpart corresponding to the non-truncated observation times.

The pair $(V_{i-1},V_i)$ is stationary for any $i\geq 1$. As $Z_i$ is a measurable function of $(V_{i-1},V_i)$ it is also a stationary sequence. By the identity \eqref{e:lapVi}, we have $\bbE[Z_i \,|\, V_{i-1}] = 0$, so $Z_i$ is a stationary, ergodic martingale difference sequence. A martingale CLT (see e.g.\ Theorem 18.3 in \cite{b1999}) yields, for $\bbE Z_1^2\in(0,\infty)$, 
\begin{equation}\label{e:numclt}
n^{-1/2}\sum_{i=1}^n Z_i \rightsquigarrow \mathcal N(0,\bbE Z_1^2).
\end{equation}
We show at the end of the proof that
\begin{equation}\label{e:remainder}
n^{-1/2}\sum_{i=1}^n (Z_i^\Delta-Z_i) \rightsquigarrow 0,
\end{equation}
so by Slutsky's Theorem,
\[
n^{-1/2}\sum_{i=1}^n Z_i^\Delta = n^{-1/2}\sum_{i=1}^n Z_i + n^{-1/2}\sum_{i=1}^n (Z_i^\Delta-Z_i) \rightsquigarrow \mathcal N(0,\bbE Z_1^2).
\]
We will also prove below that
\begin{equation}\label{e:erg}
n^{-1} \sum_{i=1}^n e^{-\alpha V_i^\Delta} \rightsquigarrow \frac{\alpha\varphi'(0)}{\varphi(\alpha)}.
\end{equation}
Invoking Slutsky's Theorem again, it follows that 
\begin{equation}\label{e:cltvar}
\sqrt n (\hat\varphi_n^\Delta(\alpha) - \varphi(\alpha)) = \frac{n^{-1/2}\sum_{i=1}^n Z_i^\Delta}{n^{-1}\sum_{i=1}^n e^{-\alpha V_i^\Delta}} \rightsquigarrow \mathcal N(0,\sigma^2), \qquad \sigma^2 = \frac{\varphi(\alpha)^2 \bbE Z_1^2}{(\alpha\varphi'(0))^2}.
\end{equation}

Let us first show that the asymptotic variance appearing in \eqref{e:cltvar} is indeed the expression given in \eqref{e:avar}. Note that by the identity \eqref{e:lapVi},
\begin{align*}
\xi(\xi-\varphi(\alpha)) \,\bbE e^{-\alpha V_{i-1}} e^{-\alpha V_i} 
& = \xi\, \bbE \Big[ e^{-\alpha V_{i-1}} (\xi-\varphi(\alpha)) \bbE[ e^{-\alpha V_i} \,|\, V_{i-1}] \Big] \\
& = \xi\, \bbE \Big[e^{-\alpha V_{i-1}} (\xi e^{-\alpha V_{i-1}} - \alpha \bbP(V_i=0\,|\,V_{i-1}))\Big] \\
& = \xi^2 \,\bbE e^{-2\alpha V_i} - \alpha\xi\, \bbE e^{-\alpha V_{i-1}}\one\{V_i=0\}.
\end{align*}
As a consequence, the variance of $Z_i$ can be computed as
\begin{align}
\notag
\bbE Z_i^2
& = ((\xi-\varphi(\alpha))^2 + \xi^2) \bbE e^{-2 \alpha V_i} + (\alpha^2+2\alpha(\xi-\varphi(\alpha))) \bbP(V_i=0) \\
\notag
&\phantom{={}} - 2\xi(\xi-\varphi(\alpha))\, \bbE e^{-\alpha V_i - \alpha V_{i-1}} - 2\alpha\xi \,\bbE e^{-\alpha V_{i-1}} \one\{V_i=0\}\\
\label{e:cltvar2}
& = \Big((\varphi(\alpha)^2 - 2\xi\varphi(\alpha)) \frac{2}{\varphi(2\alpha)} + \alpha + 2(\xi-\varphi(\alpha))\Big)\alpha\varphi'(0).
\end{align}
The identity for $\sigma^2$ now follows by combining \eqref{e:cltvar} and \eqref{e:cltvar2}.

It remains to be shown that \eqref{e:remainder} and \eqref{e:erg} hold. We prove \eqref{e:remainder} by showing that the term in question has vanishing $L^1$ norm. Recognizing the telescoping sums, and noting that $V_0^\Delta=V_0$, 
\begin{align*}
 \bbE\Big| \sum_{i=1}^n e^{-\alpha V_i^\Delta}-e^{-\alpha V_{i-1}^\Delta}&-e^{-\alpha V_i}+e^{-\alpha V_{i-1}} \Big| = \bbE |e^{-\alpha V_n^\Delta} - e^{-\alpha V_n}| \leq 1.
\end{align*}
as a consequence it follows from the triangle inequality that 
\begin{align*}
\bbE\Big| \sum_{i=1}^n (Z_i^\Delta-Z_i) \Big| &\leq \varphi(\alpha) \sum_{i=1}^n \bbE\big|e^{-\alpha V_i}-e^{-\alpha V_i^\Delta}\big| + \xi\,\bbE\Big| \sum_{i=1}^n e^{-\alpha V_i^\Delta}-e^{-\alpha V_{i-1}^\Delta}-e^{-\alpha V_i}+e^{-\alpha V_{i-1}} \Big| \\
&\phantom{={}} +\alpha \sum_{i=1}^n \bbE\big|\one\{V_i^\Delta = 0\} - \one\{V_i = 0\}\big| \\
&\leq \varphi(\alpha) \sum_{i=1}^n \bbE\big|e^{-\alpha V_i}-e^{-\alpha V_i^\Delta}\big| + \xi+\alpha \sum_{i=1}^n \bbE\big|\one\{V_i^\Delta = 0\} - \one\{V_i = 0\}\big|. \end{align*}

We conclude by showing that both sums in the above display vanish when multiplied by $n^{-1/2}$. Then \eqref{e:remainder} is a direct consequence, and \eqref{e:erg} follows because
\[ \bbE \Big| n^{-1} \sum_{i=1}^n (e^{-\alpha V_i^\Delta}-e^{-\alpha V_i})\Big| \leq n^{-1} \sum_{i=1}^n \bbE |e^{-\alpha V_i^\Delta}-e^{-\alpha V_i}| \to 0, \]
so by Slutsky's Theorem, the proof of Theorem \ref{t:cons}, and the identity \eqref{e:piexp},
\[ n^{-1} \sum_{i=1}^n e^{-\alpha V_i^\Delta} = n^{-1} \sum_{i=1}^n (e^{-\alpha V_i^\Delta}-e^{-\alpha V_i}) + n^{-1} \sum_{i=1}^n e^{-\alpha V_i} \rightsquigarrow \int e^{-\alpha v} \,\pi(\dd v) = \frac{\alpha\varphi'(0)}{\varphi(\alpha)}. \]
Here we consider separately the cases of finite and infinite intensity.

\noindent \textit{$\circ$~Finite intensity case.} By conditioning on the number of jumps between $S_i$ and $S_i^\Delta$,it follows that
\[ \bbE|e^{-\alpha V_i} - e^{-\alpha V_i^\Delta}| \leq \int (e^{-\alpha(x-\Delta/2)}-e^{-\alpha x}) \,\pi(\dd x) + O(\Delta) = O(\Delta), \]
using $|S_i-S_i^\Delta| \leq \Delta/2$. 
Regarding the other term, note that
\[ \bbE|\one\{V_i^\Delta = 0\} - \one\{V_i = 0\}| = \bbP(V_i^\Delta = 0) + \bbP(V_i = 0) - 2\, \bbP(V_i^\Delta = V_i = 0).  \]
By the stationarity of $V(\cdot)$, 
\begin{align*}
\bbP(V_i^\Delta = 0) - \bbP(V_i^\Delta = V_i = 0)
&= (1 - \bbP_0(V(|S_i^\Delta - S_i|)=0)) \pi(\{0\}) \\
& \leq \bbP(\text{at least one arrival between } S_i \text{ and }S_i^\Delta)  = O(\Delta).
\end{align*}
Both $O(\Delta)$ terms can be bounded by $C\Delta$ for some $C>0$ independent of $i, n$. This means that the $L^1$ norm of the remainder term, $ n^{-1/2} \bbE\,\big| \sum_{i=1}^n (Z_i^\Delta-Z_i) \big|$,
can be bounded by a constant times $\Delta \sqrt{n}$. Recalling the assumption {that $\Delta=n^{-\gamma}$ for $\gamma>1/2$}, conclude that this product vanishes.

\noindent \textit{$\circ$~Infinite intensity case.} Let $\beta_0 = (\frac{2\gamma-1}{2\gamma})^2$, so that $\beta < \beta_0 < 1$, which follows from the assumption on $\gamma$. The following fact, established in the proof of Theorem 3.1 in \cite{bg1961}, will be used later on: given $\delta > \epsilon > \beta$, there exists a positive constant $C$ depending on $\delta$ and $\epsilon$ such that for all sufficiently small $s>0$,
\begin{equation}\label{e:bg2}
\bbP(J(s) > s^{1/\delta}) \leq C s^{1-\epsilon/\delta}.
\end{equation}

Inspecting the proof of the finite-intensity case, it suffices to show that the quantities
\[ A(\Delta) := \bbE|e^{-\alpha V_i}-e^{-\alpha V_i^\Delta}|, \qquad B(\Delta) := 1 - \bbP_0(V(|S_i^\Delta - S_i|) = 0)\]
are both $o(n^{-1/2})$. Applying \eqref{e:bg2} with $\delta = 1$ and $\epsilon=\beta_0$, we obtain 
\begin{equation}\label{e:bigjumps}
\bbP(J(s)>s) \leq C s^{1-\beta_0}
\end{equation}
so, by separately considering the event $\{ V(s) > x \}$ and its complement, we find
\begin{align}
\nonumber
\bbE_x|e^{-\alpha V(s)}-e^{-\alpha x}|
& \leq \bbP_x(V(s)>x) + \bbE_x(e^{-\alpha V(s)}-e^{-\alpha x}) \boldsymbol 1\{V(s)\leq x\} \\
\nonumber
& \leq \bbP(J(s)>s) + \bbE_x \alpha (x-V(s)) \one\{V(s) \leq x\} 
\label{e:expdb}
 \leq Cs^{1-\beta_0} + \alpha s,
\end{align}
as $s\to 0$, where in the bottom line we use that $V(s) \geq x-s$ given $V(0)=x$. It follows that
\[ \int \bbE_x|e^{-\alpha V(s)} - e^{-\alpha x}| \,\pi(\dd x) \leq Cs^{1-\beta_0} + \alpha s \leq (C+\alpha)s^{1-\beta_0} \]
for sufficiently small $s>0$. By conditioning on $\min\{S_i,S_i^\Delta\}$ and using that $V(\cdot)$ is a stationary Markov process, it now follows that
\begin{align*}
A(\Delta)
&    = \int \bbE_x|e^{-\alpha V(|S_i^\Delta-S_i|)} - e^{-\alpha x}| \,\pi(\dd x) \leq (C+\alpha) \bbE |S_i-S_i^\Delta|^{1-\beta_0} \\
& \leq (C+\alpha) \Delta^{1-\beta_0}    = (C+\alpha) n^{-(4\gamma-1)/(4\gamma)} = o(n^{-1/2}).
\end{align*}

Regarding the term $B(\Delta)$, Theorem 2 in \cite{t1966} gives, for $s>0$,
\begin{align}
\nonumber
1 - \bbP_0(V(s) = 0)
& = 1 - \int_0^s (1-u/s) \,\bbP(J(s) \in \dd u) = \bbP(J(s)>s) + s^{-1} \int_0^s u \, \bbP(J(s) \in \dd u) \\
\label{e:p0db}
& \leq C s^{1-\beta_0} + s^{-1} \int_0^s u \, \bbP(J(s) \in \dd u),
\end{align}
where the last line follows from \eqref{e:bigjumps}. By \eqref{e:bg2} with $\delta = \beta_0$ and $\epsilon = \beta_0^2$, we thus obtain the bound
\begin{align*}
s^{-1} \int_0^s u \,\bbP(J(s) \in \dd u)
&    = \bbE s^{-1} J(s) \boldsymbol 1 \{J(s) \leq s^{1/\beta_0}\} + \bbE s^{-1} J(s) \boldsymbol 1 \{s^{1/\beta_0} < J(s) \leq s\} \\
& \leq s^{1/\beta_0-1} + \bbP(J(s) > s^{1/\beta_0})\leq s^{1/\beta_0-1} + C s^{1-\beta_0}
\end{align*}
(possibly for different $C$ than above) for all $s$ sufficiently small. Since $1-\beta_0 < 1/\beta_0-1$, it follows that the dominating term in \eqref{e:p0db} is of the order $s^{1-\beta_0}$, so $B(\Delta)$ is at most of the order $\Delta^{1-\beta_0}$, which is $o(n^{-1/2})$ as we already showed.
\end{proof}

\section{Simulation analysis}\label{sec:sim}

\begin{figure}[h]\begin{center}
    \begin{minipage}{0.47\textwidth}
        \begin{tikzpicture}[scale=.7]
\draw[<->] (0, 6) -- (0, 0) -- (10, 0) node[right]{$\alpha$};

\foreach \x in {2, 4, 6, 8}
{
\draw (\x, .05) -- (\x, -.05) node[below]{$\x$};
}
\foreach \y in {2, 4}
{
\draw (.05, \y) -- (-.05, \y) node[left]{$\y$};
}

\draw (1, 4.25) -- (3.5, 4.25) -- (3.5, 6) -- (1, 6) -- (1, 4.25);

\draw[black!50] (1.5, 5.5) -- (2.5, 5.5) node[black, right]{$\hat \varphi_n^\Delta$};
\draw[dashed, black!50] (1.5, 5.7) -- (2.5, 5.7);
\draw[dashed, black!50] (1.5, 5.3) -- (2.5, 5.3);
\draw           (1.5, 4.75) -- (2.5, 4.75) node[black, right]{$\varphi_\epsilon$};

\draw (0, 0) -- (0.1, 0.021) -- (0.2, 0.043) -- (0.3, 0.067) -- (0.4, 0.092) -- (0.5, 0.118) -- (0.6, 0.145) -- (0.7, 0.174) -- (0.8, 0.203) -- (0.9, 0.234) -- (1, 0.266) -- (1.1, 0.298) -- (1.2, 0.332) -- (1.3, 0.366) -- (1.4, 0.402) -- (1.5, 0.438) -- (1.6, 0.475) -- (1.7, 0.513) -- (1.8, 0.551) -- (1.9, 0.59) -- (2, 0.631) -- (2.1, 0.671) -- (2.2, 0.713) -- (2.3, 0.755) -- (2.4, 0.798) -- (2.5, 0.841) -- (2.6, 0.885) -- (2.7, 0.93) -- (2.8, 0.975) -- (2.9, 1.021) -- (3, 1.068) -- (3.1, 1.115) -- (3.2, 1.162) -- (3.3, 1.21) -- (3.4, 1.259) -- (3.5, 1.308) -- (3.6, 1.357) -- (3.7, 1.407) -- (3.8, 1.457) -- (3.9, 1.508) -- (4, 1.56) -- (4.1, 1.611) -- (4.2, 1.664) -- (4.3, 1.716) -- (4.4, 1.769) -- (4.5, 1.823) -- (4.6, 1.876) -- (4.7, 1.931) -- (4.8, 1.985) -- (4.9, 2.04) -- (5, 2.095) -- (5.1, 2.151) -- (5.2, 2.207) -- (5.3, 2.263) -- (5.4, 2.32) -- (5.5, 2.377) -- (5.6, 2.434) -- (5.7, 2.492) -- (5.8, 2.55) -- (5.9, 2.608) -- (6, 2.666) -- (6.1, 2.725) -- (6.2, 2.784) -- (6.3, 2.844) -- (6.4, 2.903) -- (6.5, 2.963) -- (6.6, 3.023) -- (6.7, 3.084) -- (6.8, 3.145) -- (6.9, 3.206) -- (7, 3.267) -- (7.1, 3.328) -- (7.2, 3.39) -- (7.3, 3.452) -- (7.4, 3.514) -- (7.5, 3.577) -- (7.6, 3.639) -- (7.7, 3.702) -- (7.8, 3.765) -- (7.9, 3.829) -- (8, 3.892) -- (8.1, 3.956) -- (8.2, 4.02) -- (8.3, 4.084) -- (8.4, 4.149) -- (8.5, 4.213) -- (8.6, 4.278) -- (8.7, 4.343) -- (8.8, 4.408) -- (8.9, 4.474) -- (9, 4.539) -- (9.1, 4.605) -- (9.2, 4.671) -- (9.3, 4.737) -- (9.4, 4.803) -- (9.5, 4.87) -- (9.6, 4.936) -- (9.7, 5.003) -- (9.8, 5.07) -- (9.9, 5.137) -- (10, 5.205);

\draw[black!50] (0, 0.001) -- (0.1, 0.023) -- (0.2, 0.046) -- (0.3, 0.071) -- (0.4, 0.096) -- (0.5, 0.123) -- (0.6, 0.151) -- (0.7, 0.18) -- (0.8, 0.21) -- (0.9, 0.242) -- (1, 0.274) -- (1.1, 0.308) -- (1.2, 0.343) -- (1.3, 0.379) -- (1.4, 0.416) -- (1.5, 0.453) -- (1.6, 0.492) -- (1.7, 0.532) -- (1.8, 0.573) -- (1.9, 0.615) -- (2, 0.657) -- (2.1, 0.701) -- (2.2, 0.745) -- (2.3, 0.791) -- (2.4, 0.837) -- (2.5, 0.884) -- (2.6, 0.931) -- (2.7, 0.98) -- (2.8, 1.029) -- (2.9, 1.079) -- (3, 1.13) -- (3.1, 1.182) -- (3.2, 1.234) -- (3.3, 1.287) -- (3.4, 1.341) -- (3.5, 1.395) -- (3.6, 1.45) -- (3.7, 1.506) -- (3.8, 1.562) -- (3.9, 1.619) -- (4, 1.676) -- (4.1, 1.734) -- (4.2, 1.793) -- (4.3, 1.852) -- (4.4, 1.912) -- (4.5, 1.972) -- (4.6, 2.033) -- (4.7, 2.094) -- (4.8, 2.156) -- (4.9, 2.218) -- (5, 2.281) -- (5.1, 2.344) -- (5.2, 2.408) -- (5.3, 2.472) -- (5.4, 2.537) -- (5.5, 2.602) -- (5.6, 2.668) -- (5.7, 2.734) -- (5.8, 2.8) -- (5.9, 2.867) -- (6, 2.934) -- (6.1, 3.002) -- (6.2, 3.069) -- (6.3, 3.138) -- (6.4, 3.206) -- (6.5, 3.275) -- (6.6, 3.345) -- (6.7, 3.414) -- (6.8, 3.484) -- (6.9, 3.555) -- (7, 3.625) -- (7.1, 3.696) -- (7.2, 3.768) -- (7.3, 3.839) -- (7.4, 3.911) -- (7.5, 3.983) -- (7.6, 4.056) -- (7.7, 4.129) -- (7.8, 4.202) -- (7.9, 4.275) -- (8, 4.348) -- (8.1, 4.422) -- (8.2, 4.496) -- (8.3, 4.571) -- (8.4, 4.645) -- (8.5, 4.72) -- (8.6, 4.795) -- (8.7, 4.87) -- (8.8, 4.946) -- (8.9, 5.021) -- (9, 5.097) -- (9.1, 5.173) -- (9.2, 5.25) -- (9.3, 5.326) -- (9.4, 5.403) -- (9.5, 5.48) -- (9.6, 5.557) -- (9.7, 5.634) -- (9.8, 5.712) -- (9.9, 5.789) -- (10, 5.867);

\draw[dashed, black!50] (0, 0.001) -- (0.1, 0.032) -- (0.2, 0.067) -- (0.3, 0.102) -- (0.4, 0.139) -- (0.5, 0.178) -- (0.6, 0.217) -- (0.7, 0.258) -- (0.8, 0.301) -- (0.9, 0.344) -- (1, 0.389) -- (1.1, 0.435) -- (1.2, 0.483) -- (1.3, 0.532) -- (1.4, 0.581) -- (1.5, 0.633) -- (1.6, 0.685) -- (1.7, 0.738) -- (1.8, 0.792) -- (1.9, 0.848) -- (2, 0.904) -- (2.1, 0.962) -- (2.2, 1.02) -- (2.3, 1.08) -- (2.4, 1.14) -- (2.5, 1.201) -- (2.6, 1.263) -- (2.7, 1.326) -- (2.8, 1.39) -- (2.9, 1.455) -- (3, 1.521) -- (3.1, 1.587) -- (3.2, 1.654) -- (3.3, 1.722) -- (3.4, 1.79) -- (3.5, 1.859) -- (3.6, 1.929) -- (3.7, 2) -- (3.8, 2.071) -- (3.9, 2.143) -- (4, 2.216) -- (4.1, 2.289) -- (4.2, 2.363) -- (4.3, 2.437) -- (4.4, 2.512) -- (4.5, 2.587) -- (4.6, 2.663) -- (4.7, 2.74) -- (4.8, 2.817) -- (4.9, 2.895) -- (5, 2.973) -- (5.1, 3.051) -- (5.2, 3.13) -- (5.3, 3.21) -- (5.4, 3.289) -- (5.5, 3.37) -- (5.6, 3.45) -- (5.7, 3.532) -- (5.8, 3.613) -- (5.9, 3.695) -- (6, 3.777) -- (6.1, 3.86) -- (6.2, 3.943) -- (6.3, 4.027) -- (6.4, 4.11) -- (6.5, 4.195) -- (6.6, 4.279) -- (6.7, 4.364) -- (6.8, 4.449) -- (6.9, 4.534) -- (7, 4.62) -- (7.1, 4.706) -- (7.2, 4.792) -- (7.3, 4.879) -- (7.4, 4.966) -- (7.5, 5.053) -- (7.6, 5.14) -- (7.7, 5.228) -- (7.8, 5.316) -- (7.9, 5.404) -- (8, 5.492) -- (8.1, 5.581) -- (8.2, 5.669) -- (8.3, 5.758) -- (8.4, 5.848) -- (8.5, 5.937) -- (8.6, 6.027) -- (8.7, 6.117) -- (8.8, 6.207) -- (8.9, 6.297) -- (9, 6.388) -- (9.1, 6.478) -- (9.2, 6.569) -- (9.3, 6.66) -- (9.4, 6.752) -- (9.5, 6.843) -- (9.6, 6.935) -- (9.7, 7.026) -- (9.8, 7.118) -- (9.9, 7.21) -- (10, 7.303);
%
\draw[dashed, black!50] (0, 0.001) -- (0.1, 0.015) -- (0.2, 0.026) -- (0.3, 0.039) -- (0.4, 0.053) -- (0.5, 0.068) -- (0.6, 0.085) -- (0.7, 0.102) -- (0.8, 0.12) -- (0.9, 0.14) -- (1, 0.16) -- (1.1, 0.181) -- (1.2, 0.203) -- (1.3, 0.226) -- (1.4, 0.25) -- (1.5, 0.274) -- (1.6, 0.3) -- (1.7, 0.326) -- (1.8, 0.354) -- (1.9, 0.382) -- (2, 0.41) -- (2.1, 0.44) -- (2.2, 0.47) -- (2.3, 0.501) -- (2.4, 0.533) -- (2.5, 0.566) -- (2.6, 0.599) -- (2.7, 0.633) -- (2.8, 0.668) -- (2.9, 0.704) -- (3, 0.74) -- (3.1, 0.777) -- (3.2, 0.814) -- (3.3, 0.852) -- (3.4, 0.891) -- (3.5, 0.93) -- (3.6, 0.97) -- (3.7, 1.011) -- (3.8, 1.052) -- (3.9, 1.094) -- (4, 1.136) -- (4.1, 1.179) -- (4.2, 1.223) -- (4.3, 1.267) -- (4.4, 1.311) -- (4.5, 1.357) -- (4.6, 1.402) -- (4.7, 1.448) -- (4.8, 1.495) -- (4.9, 1.542) -- (5, 1.59) -- (5.1, 1.638) -- (5.2, 1.686) -- (5.3, 1.735) -- (5.4, 1.785) -- (5.5, 1.835) -- (5.6, 1.885) -- (5.7, 1.936) -- (5.8, 1.987) -- (5.9, 2.039) -- (6, 2.091) -- (6.1, 2.143) -- (6.2, 2.196) -- (6.3, 2.249) -- (6.4, 2.302) -- (6.5, 2.356) -- (6.6, 2.411) -- (6.7, 2.465) -- (6.8, 2.52) -- (6.9, 2.575) -- (7, 2.631) -- (7.1, 2.687) -- (7.2, 2.743) -- (7.3, 2.8) -- (7.4, 2.857) -- (7.5, 2.914) -- (7.6, 2.972) -- (7.7, 3.03) -- (7.8, 3.088) -- (7.9, 3.146) -- (8, 3.205) -- (8.1, 3.264) -- (8.2, 3.323) -- (8.3, 3.383) -- (8.4, 3.442) -- (8.5, 3.502) -- (8.6, 3.563) -- (8.7, 3.623) -- (8.8, 3.684) -- (8.9, 3.745) -- (9, 3.806) -- (9.1, 3.868) -- (9.2, 3.93) -- (9.3, 3.992) -- (9.4, 4.054) -- (9.5, 4.116) -- (9.6, 4.179) -- (9.7, 4.242) -- (9.8, 4.305) -- (9.9, 4.368) -- (10, 4.431);
\end{tikzpicture}
        \caption{Approximate pointwise confidence intervals, using $\xi = 1$, $\Delta = 4 \cdot 10^{-4} \approx 100^{-1.707}$, $N\Delta=100$. In this simulation we obtained $n=105$.}
        \label{f:confInt}
    \end{minipage}
    \hfill
    \begin{minipage}{0.47\textwidth}
        \begin{tikzpicture}
\foreach \x in {(-2.807, 2.273), (-2.432, 2.412), (-2.241, 2.548), (-2.108, 2.584), (-2.005, 2.596), (-1.919, 2.709), (-1.845, 2.758), (-1.78, 2.779), (-1.722, 2.79), (-1.67, 2.796), (-1.621, 2.85), (-1.576, 2.958), (-1.534, 3.055), (-1.495, 3.057), (-1.457, 3.06), (-1.422, 3.075), (-1.388, 3.115), (-1.356, 3.143), (-1.326, 3.144), (-1.296, 3.151), (-1.267, 3.175), (-1.24, 3.186), (-1.213, 3.221), (-1.188, 3.251), (-1.163, 3.31), (-1.138, 3.356), (-1.115, 3.36), (-1.092, 3.37), (-1.069, 3.386), (-1.047, 3.423), (-1.026, 3.439), (-1.005, 3.439), (-0.984, 3.444), (-0.964, 3.471), (-0.944, 3.478), (-0.925, 3.484), (-0.906, 3.491), (-0.887, 3.511), (-0.869, 3.531), (-0.851, 3.542), (-0.833, 3.583), (-0.815, 3.592), (-0.798, 3.595), (-0.781, 3.601), (-0.764, 3.604), (-0.747, 3.622), (-0.731, 3.652), (-0.714, 3.677), (-0.698, 3.688), (-0.682, 3.703), (-0.667, 3.709), (-0.651, 3.711), (-0.636, 3.722), (-0.62, 3.729), (-0.605, 3.739), (-0.59, 3.755), (-0.575, 3.759), (-0.561, 3.762), (-0.546, 3.762), (-0.532, 3.775), (-0.517, 3.782), (-0.503, 3.808), (-0.489, 3.808), (-0.475, 3.816), (-0.461, 3.823), (-0.447, 3.825), (-0.433, 3.828), (-0.419, 3.839), (-0.406, 3.842), (-0.392, 3.867), (-0.379, 3.88), (-0.365, 3.884), (-0.352, 3.896), (-0.338, 3.897), (-0.325, 3.906), (-0.312, 3.912), (-0.299, 3.935), (-0.286, 3.948), (-0.273, 3.953), (-0.26, 3.953), (-0.247, 3.959), (-0.234, 3.961), (-0.221, 3.987), (-0.208, 4.026), (-0.196, 4.04), (-0.183, 4.04), (-0.17, 4.04), (-0.157, 4.042), (-0.145, 4.043), (-0.132, 4.045), (-0.119, 4.045), (-0.107, 4.051), (-0.094, 4.053), (-0.082, 4.075), (-0.069, 4.081), (-0.056, 4.084), (-0.044, 4.085), (-0.031, 4.087), (-0.019, 4.1), (-0.006, 4.111), (0.006, 4.158), (0.019, 4.161), (0.031, 4.164), (0.044, 4.173), (0.056, 4.174), (0.069, 4.188), (0.082, 4.189), (0.094, 4.196), (0.107, 4.208), (0.119, 4.218), (0.132, 4.219), (0.145, 4.229), (0.157, 4.248), (0.17, 4.26), (0.183, 4.266), (0.196, 4.276), (0.208, 4.284), (0.221, 4.3), (0.234, 4.303), (0.247, 4.314), (0.26, 4.333), (0.273, 4.336), (0.286, 4.336), (0.299, 4.349), (0.312, 4.358), (0.325, 4.391), (0.338, 4.399), (0.352, 4.403), (0.365, 4.405), (0.379, 4.427), (0.392, 4.437), (0.406, 4.443), (0.419, 4.443), (0.433, 4.443), (0.447, 4.457), (0.461, 4.463), (0.475, 4.467), (0.489, 4.504), (0.503, 4.519), (0.517, 4.522), (0.532, 4.551), (0.546, 4.552), (0.561, 4.553), (0.575, 4.555), (0.59, 4.557), (0.605, 4.576), (0.62, 4.591), (0.636, 4.601), (0.651, 4.609), (0.667, 4.616), (0.682, 4.64), (0.698, 4.647), (0.714, 4.647), (0.731, 4.666), (0.747, 4.668), (0.764, 4.673), (0.781, 4.673), (0.798, 4.685), (0.815, 4.699), (0.833, 4.702), (0.851, 4.728), (0.869, 4.736), (0.887, 4.764), (0.906, 4.815), (0.925, 4.816), (0.944, 4.84), (0.964, 4.842), (0.984, 4.851), (1.005, 4.861), (1.026, 4.902), (1.047, 4.908), (1.069, 4.911), (1.092, 4.927), (1.115, 4.938), (1.138, 4.948), (1.163, 4.981), (1.188, 5.003), (1.213, 5.039), (1.24, 5.063), (1.267, 5.132), (1.296, 5.146), (1.326, 5.157), (1.356, 5.174), (1.388, 5.194), (1.422, 5.233), (1.457, 5.236), (1.495, 5.284), (1.534, 5.311), (1.576, 5.315), (1.621, 5.318), (1.67, 5.378), (1.722, 5.416), (1.78, 5.445), (1.845, 5.501), (1.919, 5.511), (2.005, 5.531), (2.108, 5.537), (2.241, 5.635), (2.432, 5.747), (2.807, 6.259)}{
\draw [fill] \x circle [radius=.4pt];
}

\draw[<->] (-3, 6.5) -- (-3, 2) -- (3, 2);

\foreach \x in {-2, -1, 0, 1, 2}
{
\draw (\x, 2.04) -- (\x, 1.96) node[below]{$\x$};
}

\draw (-2.96, 3) -- (-3.04, 3) node[left]{$1.5$};
\draw (-2.96, 4) -- (-3.04, 4) node[left]{$2$};
\draw (-2.96, 5) -- (-3.04, 5) node[left]{$2.5$};
\draw (-2.96, 6) -- (-3.04, 6) node[left]{$3$};

\end{tikzpicture}
        \caption{Normal QQ-plot of 200 realizations of $\hat\varphi_n^\Delta(5)$, with the same parameters as in Figure~\ref{f:confInt}}
        \label{f:qqplot}
    \end{minipage}
\end{center}\end{figure}

To demonstrate the performance of the method we simulate an input process $J(\cdot)$ that is the sum of two independent processes:  $J_1(\cdot)$ is a Gamma process with shape parameter $\gamma$ and rate parameter $\eta$, 
and $J_2(\cdot)$ is an inverse Gaussian process, given by
$ \inf\{s \geq 0 : \lambda^{-1/2} W(s) + \mu^{-1} s = t \}$,
for $W(\cdot)$ a standard Brownian motion and $\lambda,\mu > 0$ positive constants. 
The input process $J(t)=J_1(t)+J_2(t)$ has Lévy density $\gamma x^{-1} e^{-\eta x} + \sqrt{\frac{\lambda}{2\pi}} x^{-3/2} e^{-\lambda x/(2\mu^2)}. $ The stability condition is $\gamma/\eta + \mu = \bbE J(1) < 1$, which is fulfilled by our choice $(\gamma,\eta,\mu,\lambda) = (2,5,1,2/5)$.   In the simulation we approximate the Lévy exponent of $J(\cdot)$ by $\int_{(\epsilon,\infty)} (e^{-\alpha x}-1) \,\nu(\dd x)$ for some small $\epsilon$, yielding $ J_\epsilon(t) = \sum_{i=1}^{N(t)} B_i$,  where $N(\cdot)$ is a Poisson process of rate $r_\epsilon = \nu(\epsilon,\infty) < \infty$ and the $B_i$ are i.i.d.\ with distribution $\bbP(B_i \leq x) = r_\epsilon^{-1} \nu(\epsilon,x]$.  In the examples below we take $\epsilon = 10^{-5}$.  
The simulated workload observations are $\{V(0),V(\Delta),\ldots,V(N\Delta)\}$ for some fixed horizon $N\Delta$. The Lévy exponents $\varphi$ and $\varphi_\epsilon$ are close, as is illustrated in Figure~\ref{f:nonBootstrap}.

Figure~\ref{f:confInt} shows a realisation of the estimator $\hat\varphi_n^\Delta$ and an estimated confidence interval based on Theorem~\ref{t:clt}. The confidence interval is $\hat\varphi_n^\Delta(\alpha) + 1.96 \hat\sigma(\alpha)/\sqrt n$, where $\hat\sigma(\alpha)$ estimates the variance in \eqref{e:avar} by replacing $\varphi$ by $\hat\varphi_n^\Delta$ and $\varphi'(0) = \pi(\{0\})$ by $n^{-1}\sum_{i=1}^n \one \{ V_i^\Delta = 0 \}$. From the figure it can be concluded that this is a reasonable estimate of the variance.

To obtain Figure~\ref{f:qqplot} we simulated 200 workload processes and computed each time the estimator $\hat\varphi_n^\Delta(\alpha)$ at $\alpha = 5$. The sample quantiles from the sequence of realised estimations align well with those of the normal distribution, which corroborates the Central Limit Theorem we have established.

Recall that in our approach the workload process is sampled at a discrete collection of time instants, but only a (random) subset of these observations are effectively used in the estimator. We point out how one can make more efficient use of the available information, an issue that is particularly relevant if the sample size is relatively small. Therefore, to reduce variance due to the stochastic probing, we now consider the natural assumption that observations of $V(\cdot)$ for the entire grid $\{0,\Delta,\ldots,N\Delta\}$ are available, and propose a `resampling estimator' which is the average of $K \in \mathbb N$ realizations $\hat\varphi_{k, n(k)}^\Delta$ of the original estimator:
\[ \hat \varphi_{\text r, K}^\Delta := \frac 1K \sum_{k = 1}^K \hat\varphi_{k, n(k)}^\Delta, \]
where $n(k)$ is the stochastic number of probes in iteration $k$. Recall that for a given grid the number of observations sampled for the approximate estimator is a random variable, hence it may vary between iterations. Taking $K$ large enough  `exhausts the information' in the sample of workload observations on the grid, and thus reduces the aforementioned variability. In practice, $K$ can be chosen empirically, since $\hat \varphi_{\text r, K}^\Delta$ can be computed fast.

Figure \ref{f:nonBootstrap} illustrates five simulation realizations of the estimated L\'evy exponent function without resampling, along with the true   $\varphi$ and its approximation  $\varphi_\epsilon$  described above. The approximation performs well for $\alpha \in [0,10]$ (although the exponents grow apart as $\alpha\to\infty$).  The reflected process associated with $X_\epsilon(t) = J_\epsilon(t) - t$ has been simulated for $t \in [0,25]$ and the estimator $\hat\varphi_n^\Delta$ has been evaluated for every realization of the process. The main conclusion from the plot is that the sampling scheme causes much variability in the estimator, when sample sizes are small.  
\begin{figure}\begin{center}
\begin{minipage}{0.47\textwidth}
\centering
\begin{tikzpicture}[scale=.66]
\draw[<->] (0, 6) -- (0, 0) -- (10, 0) node[right]{$\alpha$};

\foreach \x in {2, 4, 6, 8}
{
\draw (\x, .05) -- (\x, -.05) node[below]{$\x$};
}
\foreach \y in {2, 4}
{
\draw (.05, \y) -- (-.05, \y) node[left]{$\y$};
}

\draw (1, 3.5) -- (3.75, 3.5) -- (3.75, 6) -- (1, 6) -- (1, 3.5);

\draw[black!50] (1.5, 5.5) -- (2.5, 5.5) node[black, right]{$\hat\varphi_n^\Delta$};
\draw           (1.5, 4.75) -- (2.5, 4.75) node[black, right]{$\varphi_\epsilon$};
\draw[dashed]   (1.5, 4) -- (2.5, 4) node[black, right]{$\varphi$};

\draw[dashed] (0, 0) -- (0.1, 0.021) -- (0.2, 0.043) -- (0.3, 0.066) -- (0.4, 0.091) -- (0.5, 0.117) -- (0.6, 0.144) -- (0.7, 0.172) -- (0.8, 0.201) -- (0.9, 0.232) -- (1, 0.263) -- (1.1, 0.295) -- (1.2, 0.329) -- (1.3, 0.363) -- (1.4, 0.398) -- (1.5, 0.434) -- (1.6, 0.471) -- (1.7, 0.508) -- (1.8, 0.547) -- (1.9, 0.586) -- (2, 0.625) -- (2.1, 0.666) -- (2.2, 0.707) -- (2.3, 0.749) -- (2.4, 0.792) -- (2.5, 0.835) -- (2.6, 0.879) -- (2.7, 0.923) -- (2.8, 0.968) -- (2.9, 1.014) -- (3, 1.06) -- (3.1, 1.107) -- (3.2, 1.154) -- (3.3, 1.202) -- (3.4, 1.25) -- (3.5, 1.299) -- (3.6, 1.348) -- (3.7, 1.398) -- (3.8, 1.448) -- (3.9, 1.498) -- (4, 1.55) -- (4.1, 1.601) -- (4.2, 1.653) -- (4.3, 1.705) -- (4.4, 1.758) -- (4.5, 1.811) -- (4.6, 1.865) -- (4.7, 1.919) -- (4.8, 1.973) -- (4.9, 2.028) -- (5, 2.083) -- (5.1, 2.138) -- (5.2, 2.194) -- (5.3, 2.25) -- (5.4, 2.306) -- (5.5, 2.363) -- (5.6, 2.42) -- (5.7, 2.477) -- (5.8, 2.535) -- (5.9, 2.593) -- (6, 2.651) -- (6.1, 2.71) -- (6.2, 2.768) -- (6.3, 2.828) -- (6.4, 2.887) -- (6.5, 2.947) -- (6.6, 3.007) -- (6.7, 3.067) -- (6.8, 3.127) -- (6.9, 3.188) -- (7, 3.249) -- (7.1, 3.31) -- (7.2, 3.372) -- (7.3, 3.433) -- (7.4, 3.495) -- (7.5, 3.558) -- (7.6, 3.62) -- (7.7, 3.683) -- (7.8, 3.746) -- (7.9, 3.809) -- (8, 3.872) -- (8.1, 3.936) -- (8.2, 3.999) -- (8.3, 4.063) -- (8.4, 4.127) -- (8.5, 4.192) -- (8.6, 4.256) -- (8.7, 4.321) -- (8.8, 4.386) -- (8.9, 4.451) -- (9, 4.516) -- (9.1, 4.582) -- (9.2, 4.648) -- (9.3, 4.713) -- (9.4, 4.779) -- (9.5, 4.846) -- (9.6, 4.912) -- (9.7, 4.979) -- (9.8, 5.045) -- (9.9, 5.112) -- (10, 5.179);

\draw (0, 0) -- (0.1, 0.021) -- (0.2, 0.043) -- (0.3, 0.067) -- (0.4, 0.092) -- (0.5, 0.118) -- (0.6, 0.145) -- (0.7, 0.174) -- (0.8, 0.203) -- (0.9, 0.234) -- (1, 0.266) -- (1.1, 0.298) -- (1.2, 0.332) -- (1.3, 0.366) -- (1.4, 0.402) -- (1.5, 0.438) -- (1.6, 0.475) -- (1.7, 0.513) -- (1.8, 0.551) -- (1.9, 0.59) -- (2, 0.631) -- (2.1, 0.671) -- (2.2, 0.713) -- (2.3, 0.755) -- (2.4, 0.798) -- (2.5, 0.841) -- (2.6, 0.885) -- (2.7, 0.93) -- (2.8, 0.975) -- (2.9, 1.021) -- (3, 1.068) -- (3.1, 1.115) -- (3.2, 1.162) -- (3.3, 1.21) -- (3.4, 1.259) -- (3.5, 1.308) -- (3.6, 1.357) -- (3.7, 1.407) -- (3.8, 1.457) -- (3.9, 1.508) -- (4, 1.56) -- (4.1, 1.611) -- (4.2, 1.664) -- (4.3, 1.716) -- (4.4, 1.769) -- (4.5, 1.823) -- (4.6, 1.876) -- (4.7, 1.931) -- (4.8, 1.985) -- (4.9, 2.04) -- (5, 2.095) -- (5.1, 2.151) -- (5.2, 2.207) -- (5.3, 2.263) -- (5.4, 2.32) -- (5.5, 2.377) -- (5.6, 2.434) -- (5.7, 2.492) -- (5.8, 2.55) -- (5.9, 2.608) -- (6, 2.666) -- (6.1, 2.725) -- (6.2, 2.784) -- (6.3, 2.844) -- (6.4, 2.903) -- (6.5, 2.963) -- (6.6, 3.023) -- (6.7, 3.084) -- (6.8, 3.145) -- (6.9, 3.206) -- (7, 3.267) -- (7.1, 3.328) -- (7.2, 3.39) -- (7.3, 3.452) -- (7.4, 3.514) -- (7.5, 3.577) -- (7.6, 3.639) -- (7.7, 3.702) -- (7.8, 3.765) -- (7.9, 3.829) -- (8, 3.892) -- (8.1, 3.956) -- (8.2, 4.02) -- (8.3, 4.084) -- (8.4, 4.149) -- (8.5, 4.213) -- (8.6, 4.278) -- (8.7, 4.343) -- (8.8, 4.408) -- (8.9, 4.474) -- (9, 4.539) -- (9.1, 4.605) -- (9.2, 4.671) -- (9.3, 4.737) -- (9.4, 4.803) -- (9.5, 4.87) -- (9.6, 4.936) -- (9.7, 5.003) -- (9.8, 5.07) -- (9.9, 5.137) -- (10, 5.205);

\draw[black!50] (0, 0.007) -- (0.1, 0.019) -- (0.2, 0.031) -- (0.3, 0.044) -- (0.4, 0.058) -- (0.5, 0.073) -- (0.6, 0.088) -- (0.7, 0.103) -- (0.8, 0.12) -- (0.9, 0.137) -- (1, 0.155) -- (1.1, 0.173) -- (1.2, 0.192) -- (1.3, 0.211) -- (1.4, 0.231) -- (1.5, 0.251) -- (1.6, 0.272) -- (1.7, 0.294) -- (1.8, 0.316) -- (1.9, 0.338) -- (2, 0.361) -- (2.1, 0.385) -- (2.2, 0.408) -- (2.3, 0.433) -- (2.4, 0.457) -- (2.5, 0.482) -- (2.6, 0.508) -- (2.7, 0.534) -- (2.8, 0.56) -- (2.9, 0.587) -- (3, 0.613) -- (3.1, 0.641) -- (3.2, 0.668) -- (3.3, 0.696) -- (3.4, 0.724) -- (3.5, 0.753) -- (3.6, 0.782) -- (3.7, 0.811) -- (3.8, 0.84) -- (3.9, 0.87) -- (4, 0.899) -- (4.1, 0.93) -- (4.2, 0.96) -- (4.3, 0.99) -- (4.4, 1.021) -- (4.5, 1.052) -- (4.6, 1.083) -- (4.7, 1.115) -- (4.8, 1.146) -- (4.9, 1.178) -- (5, 1.21) -- (5.1, 1.242) -- (5.2, 1.275) -- (5.3, 1.307) -- (5.4, 1.34) -- (5.5, 1.372) -- (5.6, 1.405) -- (5.7, 1.438) -- (5.8, 1.472) -- (5.9, 1.505) -- (6, 1.538) -- (6.1, 1.572) -- (6.2, 1.606) -- (6.3, 1.639) -- (6.4, 1.673) -- (6.5, 1.707) -- (6.6, 1.741) -- (6.7, 1.776) -- (6.8, 1.81) -- (6.9, 1.844) -- (7, 1.879) -- (7.1, 1.913) -- (7.2, 1.948) -- (7.3, 1.982) -- (7.4, 2.017) -- (7.5, 2.052) -- (7.6, 2.087) -- (7.7, 2.122) -- (7.8, 2.157) -- (7.9, 2.192) -- (8, 2.227) -- (8.1, 2.262) -- (8.2, 2.297) -- (8.3, 2.332) -- (8.4, 2.367) -- (8.5, 2.403) -- (8.6, 2.438) -- (8.7, 2.473) -- (8.8, 2.509) -- (8.9, 2.544) -- (9, 2.58) -- (9.1, 2.615) -- (9.2, 2.651) -- (9.3, 2.686) -- (9.4, 2.722) -- (9.5, 2.757) -- (9.6, 2.793) -- (9.7, 2.828) -- (9.8, 2.864) -- (9.9, 2.899) -- (10, 2.935) node[right]{\footnotesize $28$};
\draw[black!50] (0, 0) -- (0.1, 0.016) -- (0.2, 0.032) -- (0.3, 0.05) -- (0.4, 0.069) -- (0.5, 0.089) -- (0.6, 0.11) -- (0.7, 0.132) -- (0.8, 0.155) -- (0.9, 0.179) -- (1, 0.205) -- (1.1, 0.231) -- (1.2, 0.259) -- (1.3, 0.287) -- (1.4, 0.317) -- (1.5, 0.347) -- (1.6, 0.378) -- (1.7, 0.411) -- (1.8, 0.444) -- (1.9, 0.478) -- (2, 0.513) -- (2.1, 0.549) -- (2.2, 0.585) -- (2.3, 0.623) -- (2.4, 0.661) -- (2.5, 0.7) -- (2.6, 0.739) -- (2.7, 0.78) -- (2.8, 0.821) -- (2.9, 0.862) -- (3, 0.905) -- (3.1, 0.948) -- (3.2, 0.991) -- (3.3, 1.035) -- (3.4, 1.08) -- (3.5, 1.125) -- (3.6, 1.17) -- (3.7, 1.216) -- (3.8, 1.263) -- (3.9, 1.31) -- (4, 1.357) -- (4.1, 1.405) -- (4.2, 1.453) -- (4.3, 1.502) -- (4.4, 1.55) -- (4.5, 1.6) -- (4.6, 1.649) -- (4.7, 1.699) -- (4.8, 1.749) -- (4.9, 1.8) -- (5, 1.85) -- (5.1, 1.901) -- (5.2, 1.952) -- (5.3, 2.004) -- (5.4, 2.056) -- (5.5, 2.107) -- (5.6, 2.159) -- (5.7, 2.212) -- (5.8, 2.264) -- (5.9, 2.317) -- (6, 2.37) -- (6.1, 2.423) -- (6.2, 2.476) -- (6.3, 2.529) -- (6.4, 2.582) -- (6.5, 2.636) -- (6.6, 2.689) -- (6.7, 2.743) -- (6.8, 2.797) -- (6.9, 2.851) -- (7, 2.905) -- (7.1, 2.959) -- (7.2, 3.013) -- (7.3, 3.068) -- (7.4, 3.122) -- (7.5, 3.177) -- (7.6, 3.231) -- (7.7, 3.286) -- (7.8, 3.34) -- (7.9, 3.395) -- (8, 3.45) -- (8.1, 3.505) -- (8.2, 3.56) -- (8.3, 3.615) -- (8.4, 3.67) -- (8.5, 3.725) -- (8.6, 3.78) -- (8.7, 3.835) -- (8.8, 3.89) -- (8.9, 3.946) -- (9, 4.001) -- (9.1, 4.056) -- (9.2, 4.112) -- (9.3, 4.167) -- (9.4, 4.223) -- (9.5, 4.278) -- (9.6, 4.334) -- (9.7, 4.389) -- (9.8, 4.445) -- (9.9, 4.5) -- (10, 4.556) node[right]{\footnotesize $21$};
\draw[black!50] (0, 0) -- (0.1, 0.019) -- (0.2, 0.039) -- (0.3, 0.06) -- (0.4, 0.083) -- (0.5, 0.107) -- (0.6, 0.132) -- (0.7, 0.158) -- (0.8, 0.186) -- (0.9, 0.214) -- (1, 0.244) -- (1.1, 0.275) -- (1.2, 0.307) -- (1.3, 0.34) -- (1.4, 0.374) -- (1.5, 0.409) -- (1.6, 0.444) -- (1.7, 0.481) -- (1.8, 0.519) -- (1.9, 0.557) -- (2, 0.597) -- (2.1, 0.637) -- (2.2, 0.678) -- (2.3, 0.719) -- (2.4, 0.762) -- (2.5, 0.805) -- (2.6, 0.849) -- (2.7, 0.893) -- (2.8, 0.939) -- (2.9, 0.984) -- (3, 1.031) -- (3.1, 1.078) -- (3.2, 1.126) -- (3.3, 1.174) -- (3.4, 1.222) -- (3.5, 1.272) -- (3.6, 1.321) -- (3.7, 1.372) -- (3.8, 1.422) -- (3.9, 1.473) -- (4, 1.525) -- (4.1, 1.577) -- (4.2, 1.63) -- (4.3, 1.682) -- (4.4, 1.736) -- (4.5, 1.789) -- (4.6, 1.843) -- (4.7, 1.897) -- (4.8, 1.952) -- (4.9, 2.007) -- (5, 2.062) -- (5.1, 2.118) -- (5.2, 2.174) -- (5.3, 2.23) -- (5.4, 2.286) -- (5.5, 2.343) -- (5.6, 2.4) -- (5.7, 2.457) -- (5.8, 2.514) -- (5.9, 2.572) -- (6, 2.63) -- (6.1, 2.688) -- (6.2, 2.746) -- (6.3, 2.804) -- (6.4, 2.863) -- (6.5, 2.921) -- (6.6, 2.98) -- (6.7, 3.039) -- (6.8, 3.099) -- (6.9, 3.158) -- (7, 3.217) -- (7.1, 3.277) -- (7.2, 3.337) -- (7.3, 3.397) -- (7.4, 3.456) -- (7.5, 3.517) -- (7.6, 3.577) -- (7.7, 3.637) -- (7.8, 3.697) -- (7.9, 3.758) -- (8, 3.818) -- (8.1, 3.879) -- (8.2, 3.94) -- (8.3, 4.001) -- (8.4, 4.061) -- (8.5, 4.122) -- (8.6, 4.183) -- (8.7, 4.244) -- (8.8, 4.306) -- (8.9, 4.367) -- (9, 4.428) -- (9.1, 4.489) -- (9.2, 4.551) -- (9.3, 4.612) -- (9.4, 4.673) -- (9.5, 4.735) -- (9.6, 4.796) -- (9.7, 4.858) -- (9.8, 4.92) -- (9.9, 4.981) -- (10, 5.043) node[right]{\footnotesize $34$};
\draw[black!50] (0, 0) -- (0.1, 0.012) -- (0.2, 0.025) -- (0.3, 0.039) -- (0.4, 0.053) -- (0.5, 0.069) -- (0.6, 0.086) -- (0.7, 0.103) -- (0.8, 0.121) -- (0.9, 0.141) -- (1, 0.161) -- (1.1, 0.182) -- (1.2, 0.203) -- (1.3, 0.226) -- (1.4, 0.249) -- (1.5, 0.273) -- (1.6, 0.298) -- (1.7, 0.323) -- (1.8, 0.349) -- (1.9, 0.376) -- (2, 0.403) -- (2.1, 0.431) -- (2.2, 0.46) -- (2.3, 0.489) -- (2.4, 0.519) -- (2.5, 0.549) -- (2.6, 0.58) -- (2.7, 0.612) -- (2.8, 0.643) -- (2.9, 0.676) -- (3, 0.709) -- (3.1, 0.742) -- (3.2, 0.775) -- (3.3, 0.809) -- (3.4, 0.844) -- (3.5, 0.878) -- (3.6, 0.914) -- (3.7, 0.949) -- (3.8, 0.985) -- (3.9, 1.021) -- (4, 1.057) -- (4.1, 1.094) -- (4.2, 1.131) -- (4.3, 1.168) -- (4.4, 1.206) -- (4.5, 1.243) -- (4.6, 1.281) -- (4.7, 1.32) -- (4.8, 1.358) -- (4.9, 1.397) -- (5, 1.436) -- (5.1, 1.475) -- (5.2, 1.514) -- (5.3, 1.554) -- (5.4, 1.593) -- (5.5, 1.633) -- (5.6, 1.673) -- (5.7, 1.713) -- (5.8, 1.754) -- (5.9, 1.794) -- (6, 1.835) -- (6.1, 1.875) -- (6.2, 1.916) -- (6.3, 1.957) -- (6.4, 1.998) -- (6.5, 2.04) -- (6.6, 2.081) -- (6.7, 2.122) -- (6.8, 2.164) -- (6.9, 2.206) -- (7, 2.247) -- (7.1, 2.289) -- (7.2, 2.331) -- (7.3, 2.373) -- (7.4, 2.415) -- (7.5, 2.457) -- (7.6, 2.499) -- (7.7, 2.541) -- (7.8, 2.584) -- (7.9, 2.626) -- (8, 2.668) -- (8.1, 2.711) -- (8.2, 2.753) -- (8.3, 2.796) -- (8.4, 2.838) -- (8.5, 2.881) -- (8.6, 2.924) -- (8.7, 2.966) -- (8.8, 3.009) -- (8.9, 3.052) -- (9, 3.095) -- (9.1, 3.137) -- (9.2, 3.18) -- (9.3, 3.223) -- (9.4, 3.266) -- (9.5, 3.309) -- (9.6, 3.352) -- (9.7, 3.395) -- (9.8, 3.437) -- (9.9, 3.48) -- (10, 3.523) node[right]{\footnotesize $27$};
\draw[black!50] (0, 0) -- (0.1, 0.026) -- (0.2, 0.053) -- (0.3, 0.083) -- (0.4, 0.113) -- (0.5, 0.146) -- (0.6, 0.18) -- (0.7, 0.215) -- (0.8, 0.252) -- (0.9, 0.29) -- (1, 0.329) -- (1.1, 0.37) -- (1.2, 0.411) -- (1.3, 0.454) -- (1.4, 0.498) -- (1.5, 0.542) -- (1.6, 0.588) -- (1.7, 0.635) -- (1.8, 0.682) -- (1.9, 0.73) -- (2, 0.779) -- (2.1, 0.828) -- (2.2, 0.879) -- (2.3, 0.929) -- (2.4, 0.981) -- (2.5, 1.033) -- (2.6, 1.085) -- (2.7, 1.138) -- (2.8, 1.192) -- (2.9, 1.246) -- (3, 1.3) -- (3.1, 1.355) -- (3.2, 1.41) -- (3.3, 1.465) -- (3.4, 1.521) -- (3.5, 1.577) -- (3.6, 1.633) -- (3.7, 1.69) -- (3.8, 1.747) -- (3.9, 1.804) -- (4, 1.861) -- (4.1, 1.919) -- (4.2, 1.977) -- (4.3, 2.035) -- (4.4, 2.093) -- (4.5, 2.152) -- (4.6, 2.21) -- (4.7, 2.269) -- (4.8, 2.328) -- (4.9, 2.387) -- (5, 2.446) -- (5.1, 2.506) -- (5.2, 2.565) -- (5.3, 2.625) -- (5.4, 2.684) -- (5.5, 2.744) -- (5.6, 2.804) -- (5.7, 2.864) -- (5.8, 2.924) -- (5.9, 2.984) -- (6, 3.045) -- (6.1, 3.105) -- (6.2, 3.165) -- (6.3, 3.226) -- (6.4, 3.286) -- (6.5, 3.347) -- (6.6, 3.408) -- (6.7, 3.468) -- (6.8, 3.529) -- (6.9, 3.59) -- (7, 3.651) -- (7.1, 3.712) -- (7.2, 3.773) -- (7.3, 3.834) -- (7.4, 3.895) -- (7.5, 3.956) -- (7.6, 4.017) -- (7.7, 4.078) -- (7.8, 4.139) -- (7.9, 4.2) -- (8, 4.261) -- (8.1, 4.322) -- (8.2, 4.384) -- (8.3, 4.445) -- (8.4, 4.506) -- (8.5, 4.567) -- (8.6, 4.629) -- (8.7, 4.69) -- (8.8, 4.751) -- (8.9, 4.813) -- (9, 4.874) -- (9.1, 4.935) -- (9.2, 4.997) -- (9.3, 5.058) -- (9.4, 5.119) -- (9.5, 5.181) -- (9.6, 5.242) -- (9.7, 5.304) -- (9.8, 5.365) -- (9.9, 5.426) -- (10, 5.488) node[right]{\footnotesize $29$};
\end{tikzpicture}
\caption{5 realizations of $\hat\varphi_n^\Delta$ using $\xi = \Delta = 1$ and $N\Delta=25$. Number of probes $n$ are given on the right.}
\label{f:nonBootstrap}
\end{minipage}\hfill
\begin{minipage}{0.47\textwidth}
\centering
\begin{tikzpicture}[scale=.66]
\draw[<->] (0, 6) -- (0, 0) -- (10, 0) node[right]{$\alpha$};

\foreach \x in {2, 4, 6, 8}
{
\draw (\x, .05) -- (\x, -.05) node[below]{$\x$};
}
\foreach \y in {2, 4}
{
\draw (.05, \y) -- (-.05, \y) node[left]{$\y$};
}

\draw (1, 4.25) -- (4, 4.25) -- (4, 6) -- (1, 6) -- (1, 4.25);

\draw[black!50] (1.5, 5.5) -- (2.5, 5.5) node[black, right]{$\hat \varphi_{\text r, K}^\Delta$};
\draw           (1.5, 4.75) -- (2.5, 4.75) node[black, right]{$\varphi_\epsilon$};

\draw (0, 0) -- (0.1, 0.021) -- (0.2, 0.043) -- (0.3, 0.067) -- (0.4, 0.092) -- (0.5, 0.118) -- (0.6, 0.145) -- (0.7, 0.174) -- (0.8, 0.203) -- (0.9, 0.234) -- (1, 0.266) -- (1.1, 0.298) -- (1.2, 0.332) -- (1.3, 0.366) -- (1.4, 0.402) -- (1.5, 0.438) -- (1.6, 0.475) -- (1.7, 0.513) -- (1.8, 0.551) -- (1.9, 0.59) -- (2, 0.631) -- (2.1, 0.671) -- (2.2, 0.713) -- (2.3, 0.755) -- (2.4, 0.798) -- (2.5, 0.841) -- (2.6, 0.885) -- (2.7, 0.93) -- (2.8, 0.975) -- (2.9, 1.021) -- (3, 1.068) -- (3.1, 1.115) -- (3.2, 1.162) -- (3.3, 1.21) -- (3.4, 1.259) -- (3.5, 1.308) -- (3.6, 1.357) -- (3.7, 1.407) -- (3.8, 1.457) -- (3.9, 1.508) -- (4, 1.56) -- (4.1, 1.611) -- (4.2, 1.664) -- (4.3, 1.716) -- (4.4, 1.769) -- (4.5, 1.823) -- (4.6, 1.876) -- (4.7, 1.931) -- (4.8, 1.985) -- (4.9, 2.04) -- (5, 2.095) -- (5.1, 2.151) -- (5.2, 2.207) -- (5.3, 2.263) -- (5.4, 2.32) -- (5.5, 2.377) -- (5.6, 2.434) -- (5.7, 2.492) -- (5.8, 2.55) -- (5.9, 2.608) -- (6, 2.666) -- (6.1, 2.725) -- (6.2, 2.784) -- (6.3, 2.844) -- (6.4, 2.903) -- (6.5, 2.963) -- (6.6, 3.023) -- (6.7, 3.084) -- (6.8, 3.145) -- (6.9, 3.206) -- (7, 3.267) -- (7.1, 3.328) -- (7.2, 3.39) -- (7.3, 3.452) -- (7.4, 3.514) -- (7.5, 3.577) -- (7.6, 3.639) -- (7.7, 3.702) -- (7.8, 3.765) -- (7.9, 3.829) -- (8, 3.892) -- (8.1, 3.956) -- (8.2, 4.02) -- (8.3, 4.084) -- (8.4, 4.149) -- (8.5, 4.213) -- (8.6, 4.278) -- (8.7, 4.343) -- (8.8, 4.408) -- (8.9, 4.474) -- (9, 4.539) -- (9.1, 4.605) -- (9.2, 4.671) -- (9.3, 4.737) -- (9.4, 4.803) -- (9.5, 4.87) -- (9.6, 4.936) -- (9.7, 5.003) -- (9.8, 5.07) -- (9.9, 5.137) -- (10, 5.205);

\draw[black!50] (0, 0.005) -- (0.1, 0.033) -- (0.2, 0.062) -- (0.3, 0.091) -- (0.4, 0.122) -- (0.5, 0.154) -- (0.6, 0.186) -- (0.7, 0.219) -- (0.8, 0.254) -- (0.9, 0.289) -- (1, 0.325) -- (1.1, 0.362) -- (1.2, 0.399) -- (1.3, 0.438) -- (1.4, 0.477) -- (1.5, 0.517) -- (1.6, 0.557) -- (1.7, 0.599) -- (1.8, 0.641) -- (1.9, 0.684) -- (2, 0.727) -- (2.1, 0.771) -- (2.2, 0.816) -- (2.3, 0.861) -- (2.4, 0.907) -- (2.5, 0.954) -- (2.6, 1.001) -- (2.7, 1.049) -- (2.8, 1.097) -- (2.9, 1.146) -- (3, 1.195) -- (3.1, 1.245) -- (3.2, 1.296) -- (3.3, 1.346) -- (3.4, 1.398) -- (3.5, 1.45) -- (3.6, 1.502) -- (3.7, 1.554) -- (3.8, 1.607) -- (3.9, 1.661) -- (4, 1.715) -- (4.1, 1.769) -- (4.2, 1.824) -- (4.3, 1.879) -- (4.4, 1.934) -- (4.5, 1.99) -- (4.6, 2.046) -- (4.7, 2.103) -- (4.8, 2.159) -- (4.9, 2.217) -- (5, 2.274) -- (5.1, 2.332) -- (5.2, 2.39) -- (5.3, 2.448) -- (5.4, 2.507) -- (5.5, 2.566) -- (5.6, 2.625) -- (5.7, 2.685) -- (5.8, 2.744) -- (5.9, 2.805) -- (6, 2.865) -- (6.1, 2.925) -- (6.2, 2.986) -- (6.3, 3.047) -- (6.4, 3.109) -- (6.5, 3.17) -- (6.6, 3.232) -- (6.7, 3.294) -- (6.8, 3.356) -- (6.9, 3.419) -- (7, 3.481) -- (7.1, 3.544) -- (7.2, 3.607) -- (7.3, 3.671) -- (7.4, 3.734) -- (7.5, 3.798) -- (7.6, 3.862) -- (7.7, 3.926) -- (7.8, 3.99) -- (7.9, 4.055) -- (8, 4.119) -- (8.1, 4.184) -- (8.2, 4.249) -- (8.3, 4.314) -- (8.4, 4.38) -- (8.5, 4.445) -- (8.6, 4.511) -- (8.7, 4.577) -- (8.8, 4.643) -- (8.9, 4.709) -- (9, 4.775) -- (9.1, 4.842) -- (9.2, 4.908) -- (9.3, 4.975) -- (9.4, 5.042) -- (9.5, 5.109) -- (9.6, 5.177) -- (9.7, 5.244) -- (9.8, 5.311) -- (9.9, 5.379) -- (10, 5.447) node[right]{\footnotesize $1$};
\draw[black!50] (0, 0.005) -- (0.1, 0.032) -- (0.2, 0.061) -- (0.3, 0.09) -- (0.4, 0.121) -- (0.5, 0.152) -- (0.6, 0.184) -- (0.7, 0.217) -- (0.8, 0.251) -- (0.9, 0.286) -- (1, 0.321) -- (1.1, 0.357) -- (1.2, 0.395) -- (1.3, 0.433) -- (1.4, 0.471) -- (1.5, 0.511) -- (1.6, 0.551) -- (1.7, 0.592) -- (1.8, 0.634) -- (1.9, 0.676) -- (2, 0.719) -- (2.1, 0.763) -- (2.2, 0.807) -- (2.3, 0.852) -- (2.4, 0.897) -- (2.5, 0.943) -- (2.6, 0.99) -- (2.7, 1.037) -- (2.8, 1.085) -- (2.9, 1.133) -- (3, 1.182) -- (3.1, 1.231) -- (3.2, 1.281) -- (3.3, 1.331) -- (3.4, 1.382) -- (3.5, 1.433) -- (3.6, 1.485) -- (3.7, 1.537) -- (3.8, 1.589) -- (3.9, 1.642) -- (4, 1.696) -- (4.1, 1.749) -- (4.2, 1.803) -- (4.3, 1.858) -- (4.4, 1.913) -- (4.5, 1.968) -- (4.6, 2.023) -- (4.7, 2.079) -- (4.8, 2.136) -- (4.9, 2.192) -- (5, 2.249) -- (5.1, 2.306) -- (5.2, 2.364) -- (5.3, 2.421) -- (5.4, 2.479) -- (5.5, 2.538) -- (5.6, 2.596) -- (5.7, 2.655) -- (5.8, 2.714) -- (5.9, 2.774) -- (6, 2.834) -- (6.1, 2.894) -- (6.2, 2.954) -- (6.3, 3.014) -- (6.4, 3.075) -- (6.5, 3.136) -- (6.6, 3.197) -- (6.7, 3.259) -- (6.8, 3.32) -- (6.9, 3.382) -- (7, 3.444) -- (7.1, 3.506) -- (7.2, 3.569) -- (7.3, 3.632) -- (7.4, 3.695) -- (7.5, 3.758) -- (7.6, 3.821) -- (7.7, 3.885) -- (7.8, 3.948) -- (7.9, 4.012) -- (8, 4.076) -- (8.1, 4.141) -- (8.2, 4.205) -- (8.3, 4.27) -- (8.4, 4.334) -- (8.5, 4.399) -- (8.6, 4.465) -- (8.7, 4.53) -- (8.8, 4.595) -- (8.9, 4.661) -- (9, 4.727) -- (9.1, 4.793) -- (9.2, 4.859) -- (9.3, 4.925) -- (9.4, 4.992) -- (9.5, 5.058) -- (9.6, 5.125) -- (9.7, 5.192) -- (9.8, 5.259) -- (9.9, 5.326) -- (10, 5.393);

\draw[black!50] (0, 0.007) -- (0.1, 0.028) -- (0.2, 0.05) -- (0.3, 0.073) -- (0.4, 0.096) -- (0.5, 0.12) -- (0.6, 0.144) -- (0.7, 0.17) -- (0.8, 0.195) -- (0.9, 0.222) -- (1, 0.248) -- (1.1, 0.276) -- (1.2, 0.304) -- (1.3, 0.332) -- (1.4, 0.361) -- (1.5, 0.391) -- (1.6, 0.421) -- (1.7, 0.451) -- (1.8, 0.482) -- (1.9, 0.514) -- (2, 0.546) -- (2.1, 0.578) -- (2.2, 0.611) -- (2.3, 0.645) -- (2.4, 0.678) -- (2.5, 0.712) -- (2.6, 0.747) -- (2.7, 0.782) -- (2.8, 0.817) -- (2.9, 0.853) -- (3, 0.889) -- (3.1, 0.926) -- (3.2, 0.963) -- (3.3, 1) -- (3.4, 1.038) -- (3.5, 1.076) -- (3.6, 1.114) -- (3.7, 1.153) -- (3.8, 1.192) -- (3.9, 1.231) -- (4, 1.271) -- (4.1, 1.311) -- (4.2, 1.351) -- (4.3, 1.392) -- (4.4, 1.433) -- (4.5, 1.474) -- (4.6, 1.516) -- (4.7, 1.557) -- (4.8, 1.599) -- (4.9, 1.642) -- (5, 1.684) -- (5.1, 1.727) -- (5.2, 1.771) -- (5.3, 1.814) -- (5.4, 1.858) -- (5.5, 1.902) -- (5.6, 1.946) -- (5.7, 1.99) -- (5.8, 2.035) -- (5.9, 2.08) -- (6, 2.125) -- (6.1, 2.17) -- (6.2, 2.216) -- (6.3, 2.262) -- (6.4, 2.308) -- (6.5, 2.354) -- (6.6, 2.401) -- (6.7, 2.447) -- (6.8, 2.494) -- (6.9, 2.541) -- (7, 2.589) -- (7.1, 2.636) -- (7.2, 2.684) -- (7.3, 2.732) -- (7.4, 2.78) -- (7.5, 2.828) -- (7.6, 2.877) -- (7.7, 2.926) -- (7.8, 2.974) -- (7.9, 3.023) -- (8, 3.073) -- (8.1, 3.122) -- (8.2, 3.172) -- (8.3, 3.221) -- (8.4, 3.271) -- (8.5, 3.321) -- (8.6, 3.371) -- (8.7, 3.422) -- (8.8, 3.472) -- (8.9, 3.523) -- (9, 3.574) -- (9.1, 3.625) -- (9.2, 3.676) -- (9.3, 3.727) -- (9.4, 3.779) -- (9.5, 3.83) -- (9.6, 3.882) -- (9.7, 3.934) -- (9.8, 3.986) -- (9.9, 4.038) -- (10, 4.091) node[right]{\footnotesize $0.1$};
\draw[black!50] (0, 0.007) -- (0.1, 0.029) -- (0.2, 0.052) -- (0.3, 0.075) -- (0.4, 0.1) -- (0.5, 0.124) -- (0.6, 0.15) -- (0.7, 0.176) -- (0.8, 0.203) -- (0.9, 0.23) -- (1, 0.258) -- (1.1, 0.286) -- (1.2, 0.315) -- (1.3, 0.345) -- (1.4, 0.375) -- (1.5, 0.405) -- (1.6, 0.437) -- (1.7, 0.468) -- (1.8, 0.5) -- (1.9, 0.533) -- (2, 0.566) -- (2.1, 0.6) -- (2.2, 0.634) -- (2.3, 0.668) -- (2.4, 0.703) -- (2.5, 0.739) -- (2.6, 0.775) -- (2.7, 0.811) -- (2.8, 0.847) -- (2.9, 0.885) -- (3, 0.922) -- (3.1, 0.96) -- (3.2, 0.998) -- (3.3, 1.037) -- (3.4, 1.076) -- (3.5, 1.115) -- (3.6, 1.155) -- (3.7, 1.195) -- (3.8, 1.235) -- (3.9, 1.276) -- (4, 1.317) -- (4.1, 1.359) -- (4.2, 1.4) -- (4.3, 1.442) -- (4.4, 1.485) -- (4.5, 1.528) -- (4.6, 1.571) -- (4.7, 1.614) -- (4.8, 1.657) -- (4.9, 1.701) -- (5, 1.745) -- (5.1, 1.79) -- (5.2, 1.834) -- (5.3, 1.879) -- (5.4, 1.925) -- (5.5, 1.97) -- (5.6, 2.016) -- (5.7, 2.062) -- (5.8, 2.108) -- (5.9, 2.154) -- (6, 2.201) -- (6.1, 2.248) -- (6.2, 2.295) -- (6.3, 2.343) -- (6.4, 2.39) -- (6.5, 2.438) -- (6.6, 2.486) -- (6.7, 2.534) -- (6.8, 2.583) -- (6.9, 2.631) -- (7, 2.68) -- (7.1, 2.729) -- (7.2, 2.779) -- (7.3, 2.828) -- (7.4, 2.878) -- (7.5, 2.928) -- (7.6, 2.978) -- (7.7, 3.028) -- (7.8, 3.078) -- (7.9, 3.129) -- (8, 3.18) -- (8.1, 3.231) -- (8.2, 3.282) -- (8.3, 3.333) -- (8.4, 3.385) -- (8.5, 3.436) -- (8.6, 3.488) -- (8.7, 3.54) -- (8.8, 3.592) -- (8.9, 3.644) -- (9, 3.697) -- (9.1, 3.749) -- (9.2, 3.802) -- (9.3, 3.855) -- (9.4, 3.908) -- (9.5, 3.961) -- (9.6, 4.014) -- (9.7, 4.068) -- (9.8, 4.122) -- (9.9, 4.175) -- (10, 4.229);

\draw[black!50] (0, 0.007) -- (0.1, 0.031) -- (0.2, 0.056) -- (0.3, 0.082) -- (0.4, 0.109) -- (0.5, 0.136) -- (0.6, 0.164) -- (0.7, 0.193) -- (0.8, 0.222) -- (0.9, 0.252) -- (1, 0.283) -- (1.1, 0.314) -- (1.2, 0.346) -- (1.3, 0.378) -- (1.4, 0.411) -- (1.5, 0.445) -- (1.6, 0.479) -- (1.7, 0.514) -- (1.8, 0.55) -- (1.9, 0.585) -- (2, 0.622) -- (2.1, 0.659) -- (2.2, 0.696) -- (2.3, 0.734) -- (2.4, 0.773) -- (2.5, 0.811) -- (2.6, 0.851) -- (2.7, 0.891) -- (2.8, 0.931) -- (2.9, 0.972) -- (3, 1.013) -- (3.1, 1.054) -- (3.2, 1.096) -- (3.3, 1.139) -- (3.4, 1.181) -- (3.5, 1.224) -- (3.6, 1.268) -- (3.7, 1.312) -- (3.8, 1.356) -- (3.9, 1.401) -- (4, 1.446) -- (4.1, 1.491) -- (4.2, 1.537) -- (4.3, 1.583) -- (4.4, 1.629) -- (4.5, 1.676) -- (4.6, 1.723) -- (4.7, 1.77) -- (4.8, 1.818) -- (4.9, 1.866) -- (5, 1.914) -- (5.1, 1.963) -- (5.2, 2.011) -- (5.3, 2.061) -- (5.4, 2.11) -- (5.5, 2.16) -- (5.6, 2.209) -- (5.7, 2.26) -- (5.8, 2.31) -- (5.9, 2.361) -- (6, 2.412) -- (6.1, 2.463) -- (6.2, 2.514) -- (6.3, 2.566) -- (6.4, 2.618) -- (6.5, 2.67) -- (6.6, 2.723) -- (6.7, 2.775) -- (6.8, 2.828) -- (6.9, 2.881) -- (7, 2.934) -- (7.1, 2.988) -- (7.2, 3.042) -- (7.3, 3.096) -- (7.4, 3.15) -- (7.5, 3.204) -- (7.6, 3.258) -- (7.7, 3.313) -- (7.8, 3.368) -- (7.9, 3.423) -- (8, 3.479) -- (8.1, 3.534) -- (8.2, 3.59) -- (8.3, 3.645) -- (8.4, 3.702) -- (8.5, 3.758) -- (8.6, 3.814) -- (8.7, 3.871) -- (8.8, 3.927) -- (8.9, 3.984) -- (9, 4.041) -- (9.1, 4.098) -- (9.2, 4.156) -- (9.3, 4.213) -- (9.4, 4.271) -- (9.5, 4.329) -- (9.6, 4.387) -- (9.7, 4.445) -- (9.8, 4.503) -- (9.9, 4.562) -- (10, 4.62);
\draw[black!50] (0, 0.007) -- (0.1, 0.031) -- (0.2, 0.057) -- (0.3, 0.083) -- (0.4, 0.11) -- (0.5, 0.137) -- (0.6, 0.165) -- (0.7, 0.194) -- (0.8, 0.224) -- (0.9, 0.254) -- (1, 0.285) -- (1.1, 0.317) -- (1.2, 0.349) -- (1.3, 0.382) -- (1.4, 0.415) -- (1.5, 0.449) -- (1.6, 0.484) -- (1.7, 0.519) -- (1.8, 0.555) -- (1.9, 0.591) -- (2, 0.628) -- (2.1, 0.665) -- (2.2, 0.703) -- (2.3, 0.741) -- (2.4, 0.78) -- (2.5, 0.819) -- (2.6, 0.859) -- (2.7, 0.899) -- (2.8, 0.94) -- (2.9, 0.981) -- (3, 1.022) -- (3.1, 1.064) -- (3.2, 1.107) -- (3.3, 1.149) -- (3.4, 1.193) -- (3.5, 1.236) -- (3.6, 1.28) -- (3.7, 1.324) -- (3.8, 1.369) -- (3.9, 1.414) -- (4, 1.46) -- (4.1, 1.505) -- (4.2, 1.552) -- (4.3, 1.598) -- (4.4, 1.645) -- (4.5, 1.692) -- (4.6, 1.739) -- (4.7, 1.787) -- (4.8, 1.835) -- (4.9, 1.884) -- (5, 1.932) -- (5.1, 1.981) -- (5.2, 2.031) -- (5.3, 2.08) -- (5.4, 2.13) -- (5.5, 2.18) -- (5.6, 2.231) -- (5.7, 2.281) -- (5.8, 2.332) -- (5.9, 2.383) -- (6, 2.435) -- (6.1, 2.486) -- (6.2, 2.538) -- (6.3, 2.591) -- (6.4, 2.643) -- (6.5, 2.696) -- (6.6, 2.749) -- (6.7, 2.802) -- (6.8, 2.855) -- (6.9, 2.909) -- (7, 2.962) -- (7.1, 3.016) -- (7.2, 3.071) -- (7.3, 3.125) -- (7.4, 3.18) -- (7.5, 3.234) -- (7.6, 3.289) -- (7.7, 3.345) -- (7.8, 3.4) -- (7.9, 3.456) -- (8, 3.511) -- (8.1, 3.567) -- (8.2, 3.624) -- (8.3, 3.68) -- (8.4, 3.737) -- (8.5, 3.793) -- (8.6, 3.85) -- (8.7, 3.907) -- (8.8, 3.964) -- (8.9, 4.022) -- (9, 4.079) -- (9.1, 4.137) -- (9.2, 4.195) -- (9.3, 4.253) -- (9.4, 4.311) -- (9.5, 4.37) -- (9.6, 4.428) -- (9.7, 4.487) -- (9.8, 4.545) -- (9.9, 4.604) -- (10, 4.664) node[right]{\footnotesize $0.01$};
\end{tikzpicture}
\caption{realizations of $\hat \varphi_{\text r, K}^\Delta$ using $\xi = 1$ and $K=1000$, for various $\Delta$ (on the right) and $N\Delta=25$. }
\label{f:bootstrapDelta}
\end{minipage}
\end{center}\end{figure}
We proceed by testing the performance of the resampling estimator.
Figure \ref{f:bootstrapDelta} shows two realizations of $\hat \varphi_{\text r, K}^\Delta$ each for three values of $\Delta$,  all based on the same simulated workload process. Per value of $\Delta$, the pairs of realizations are close, illustrating that the variance due to the random sampling is reduced significantly by applying the resampling procedure. It varies across simulations of the workload process for which value of $\Delta$ the estimator is closest to the true $\varphi$, but variations are mostly due to the stochastic nature of the workload process. In general taking smaller $\Delta$ is best, for it reduces bias and variance.

\bibliographystyle{apalike}
\bibliography{bib.bib}

\end{document}